\documentclass[12pt,fleqn]{article}
\usepackage{amsmath, amsthm,enumerate,fleqn,amssymb,}
\usepackage{titlesec,soul}
\usepackage[margin=1in]{geometry}
\titleformat{\section}[block]{\centering\bfseries}{\thesection.}{1em}{}
\titlespacing*{\section}{36pt}{6pt}{6pt}
\titleformat{\subsection}[block]{\centering\bfseries}{\thesection.}{1em}{}
\titlespacing*{\subsection}{24pt}{6pt}{6pt}
\pretolerance=999
\hyphenpenalty=999
\newtheoremstyle{theorem}{10pt}{10pt}{\sl}{\parindent}{\bf}{. }{ }{}
\theoremstyle{theorem}
\newtheorem{theorem}{Theorem}[section]

\newtheorem{lemma}[theorem]{Lemma}
\newtheoremstyle{defi}{10pt}{10pt}{\rm}{\parindent}{\bf}{. }{ }{}
\theoremstyle{defi}

\newtheorem{example}[theorem]{Example}
\newtheorem{remark}[theorem]{Remark}

\numberwithin{equation}{section}
\date{}
\mathindent 3.5em\headsep=2.25em
\newcommand{\al}{\alpha}
\newcommand{\be}{\beta}

\newcommand{\fr}{\mathcal{F}}

\newcommand{\ity}{\infty}
\newcommand{\C}{\mathbb{C}}

\newcommand{\f}{f^n(f^{n_1})^{(t_1)}\ldots(f^{n_k})^{(t_k)}}

\parindent18pt
\begin{document}
\title{\normalsize\bf SOME NORMALITY CRITERIA}
\author{\footnotesize  GOPAL DATT$^1$ AND SANJAY KUMAR$^2$  \\
\footnotesize  $^1$Department of Mathematics,\\[-2pt]
\footnotesize  University of Delhi, Delhi 110007, India\\[-2pt]
\footnotesize  e-mail: ggopal.datt@gmail.com\\
\footnotesize   $^2$Department of Mathematics,\\[-2pt]
\footnotesize  Deen Dayal Upadhyaya College, University of Delhi, Delhi 110015, India\\[-2pt]
\footnotesize  e-mail: sanjpant@gmail.com}
%\footnotesize   $^3$Department of Mathematics,\\[-2pt]
%\footnotesize  Kirori Mal college, University of Delhi, Delhi 110007, India\\[-2pt]
%\footnotesize  e-mail: shikk2003@yahoo.co.in
\maketitle
\begin{center}
\begin{minipage}{\textwidth}\fontsize{10}{10}\selectfont
\textbf{Abstract:}\ In this article we prove some normality criteria for a family of meromorphic functions which involves sharing of a non-zero value  by certain  differential monomials generated by the members of the family. These result generalize some of the results of Schwick.
\vskip.5em
{\bf Key Words:} {Meromorphic functions, holomorphic functions,  shared values, normal families.}
\vskip.5em
{\bf 2010 Mathematics Subject Classification:} { 30D45.}
\end{minipage}
\end{center}

\jot6pt
\vskip3em
\thispagestyle{empty}
\baselineskip14pt\fontsize{11}{14}\selectfont
\section{Introduction and main results}
The notion of normal families was introduced by Paul Montel in 1907. Let us begin by recalling the definition. A family   of meromorphic functions defined on a domain $D\subset \C$ is said  to be normal in the domain, if every sequence in the family  has a subsequence which converges spherically  uniformly on compact subsets of  $D$ to a meromorphic function or to $   \infty$ (see \cite{Ahl, Hay, Schiff, Yang}). \\

One important aspect  of the theory of complex analytic functions is to find normality criteria for families of meromorphic functions.  Montel obtained a normality criterion, now known as The Fundamental Normality Test, which  says that  {\it a family of meromorphic functions in a domain  is normal if it omits three distinct complex numbers.} This  result has undergone  various extensions. In 1975,  Lawrence Zalcman \cite{Zalc 1} proved a remarkable  result, now known as Zalcman's Lemma, for  families of meromorphic functions which are not normal in a domain. Roughly speaking, it says that {\it a non-normal family can be rescaled at small scale to obtain a non-constant meromorphic function in the limit. } This result of Zalcman gave birth to many new normality criteria. These normality criteria have been used extensively in complex dynamics for studying the  Julia-Fatou dichotomy.\\

Wilhelm Schwick ~\cite{Sch} gave a connection between normality and sharing values and proved a result  which says that  {\it{a family  of meromorphic functions on a domain $D\subset \C$ is normal if every function of the family and its first order derivative share three distinct complex numbers.}} Since then many results of normality criteria concerning sharing values have been obtained ~\cite{GS, Li, YMW, lahiri, lahiri 1}.\\

Let $f$ and $g$ be  meromorphic functions in a domain $D$ and  $p\in\mathbb{C}.$ If the   zeros of $f-p$ are the zeros of  $g-p$  ignoring multiplicity, we write $f=p\Rightarrow g=p.$ Hence $f=p\iff g=p$ means that $f-p$ and $g-p$ have the same zeros ignoring multiplicity. If $f-p=0\iff g-p=0$,   then we say that $f$ and $g$ share the value $p$ IM (see \cite{ccyang}).\\

Schwick \cite{Sch1} also proved a normality criterion which states that: {\it{Let $n, k$ be positive integers such that  $n\geq k+3$, let $\mathcal F$ be a family of functions meromorphic  in $ D$.  If each $f\in \mathcal F$ satisfies $(f^n)^{(k)}(z)\neq 1$ for $z\in  D$, then $\mathcal F$ is a normal family.}} This result holds good for holomorphic functions in case $n\geq k+1$. Recently  Gerd Dethloff et al. \cite{dethloff} came up with new normality criteria, which improved the result given by Schwick \cite{Sch1}.
 \begin{theorem}\label{thm}
 Let $p\neq 0$ be a complex number,  $n$ be  a non-negative integer and $n_1, n_2,\ldots, n_k,$ $ t_1, t_2,\ldots, t_k$ be positive integers. Let $\fr$ be a family of meromorphic functions in a  domain $D$ such that for every $f\in \fr, f^n(f^{n_1})^{(t_1)}\ldots (f^{n_k})^{(t_k)}- p $ is nowhere vanishing on $D$. Assume that
 \begin{enumerate}
 \item[$(a)$]{ $n_j \geq t_j$ for all $1\leq j \leq k$,}
 \item [$(b)$]{$n+\sum_{j=1}^{k}n_j\geq 3+ \sum_{j=1}^{k}t_j.$}
 \end{enumerate}
  Then $\fr$ is normal on $D$.
 \end{theorem}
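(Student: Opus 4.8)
The plan is to argue by contradiction using Zalcman's Lemma in its pointwise-rescaling form. Suppose $\fr$ is not normal at some point $z_0\in D$; then there exist functions $f_m\in\fr$, points $z_m\to z_0$, and positive reals $\rho_m\to 0^+$ such that, after a suitable choice of exponent $\alpha$, the rescalings $g_m(\zeta):=\rho_m^{-\alpha}f_m(z_m+\rho_m\zeta)$ converge spherically locally uniformly on $\C$ to a nonconstant meromorphic function $g$ of finite order. The crucial bookkeeping step is to pick $\alpha$ so that the differential monomial transforms cleanly: writing $M(f)=f^n(f^{n_1})^{(t_1)}\cdots(f^{n_k})^{(t_k)}$, one checks that each factor $(f^{n_j})^{(t_j)}$ scales like $\rho_m^{n_j\alpha-t_j}$ and $f^n$ scales like $\rho_m^{n\alpha}$, so $M(f_m)(z_m+\rho_m\zeta)=\rho_m^{\,\alpha(n+\sum n_j)-\sum t_j}\,M(g_m)(\zeta)$. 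Choosing $\alpha:=\bigl(\sum t_j\bigr)/\bigl(n+\sum n_j\bigr)$, the power of $\rho_m$ becomes $0$, so $M(g_m)\to M(g)$ locally uniformly (away from poles of $g$), and the hypothesis $M(f_m)\neq p$ forces $M(g)(\zeta)\neq p$ wherever it is defined; here hypothesis $(b)$ guarantees $0<\alpha\le 1$, which keeps the limit function $g$ nonconstant (an exponent $\alpha>0$ is needed for a standard Zalcman normalization to go through).

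Next I would show $M(g)\not\equiv p$ and $M(g)\not\equiv\infty$ so that, by Hurwitz's theorem and the previous paragraph, $M(g)$ omits the value $p$ on all of $\C$; together with the fact that $M(g)$ is a nontrivial differential polynomial in the transcendental-or-rational function $g$, I would then derive a contradiction by a value-distribution estimate. The mechanism is the standard one: if $g$ is a nonconstant meromorphic function on $\C$ and $M(g)=g^n(g^{n_1})^{(t_1)}\cdots(g^{n_k})^{(t_k)}$ omits a nonzero value, then one bounds $T(r,g)$ in terms of $N(r,1/M(g)-p)=0$, the counting function of the poles, and $m$-terms via the logarithmic derivative lemma, and the resulting inequality
\[
\Bigl(n+\sum_{j=1}^k n_j\Bigr)T(r,g)\le \Bigl(\sum_{j=1}^k t_j\Bigr)\overline N(r,g)+\Bigl(\sum_{j=1}^k t_j + 2\Bigr)T(r,g)+o(T(r,g))
\]
contradicts hypothesis $(b)$ once one also controls the pole contribution. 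Hypothesis $(a)$, $n_j\ge t_j$, is exactly what is needed so that a pole of $g$ of multiplicity $\mu$ is a pole of each $(g^{n_j})^{(t_j)}$ of multiplicity $n_j\mu+t_j$ while remaining a zero of nothing dangerous, so that $M(g)$ has poles precisely at the poles of $g$ and the zero/pole structure of $M(g)-p$ can be tracked; in particular it rules out the degenerate possibility that $M(g)$ reduces to a constant.

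There is a subtlety when the Zalcman limit $g$ is a \emph{rational} (rather than transcendental) function, or when $g$ omits $0$ or has only finitely many poles; these boundary cases must be handled separately by an elementary degree count: if $g$ is rational then $M(g)$ is rational and $M(g)-p$ has a zero unless $M(g)$ is constant, and $M(g)$ constant is incompatible with $g$ nonconstant under $(a)$–$(b)$ by comparing degrees at $\infty$ and at the zeros/poles of $g$. I expect the main obstacle to be precisely this case analysis together with the careful multiplicity accounting needed to make the Nevanlinna inequality come out with the right constants — one must separate the poles of $g$ into those that are poles of $M(g)$ and verify, using $(a)$, that no cancellation among the $k$ derivative factors can lower the order of the pole of $M(g)$ below what the estimate requires. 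Once the rescaled limit is shown to violate the value-distribution bound, the contradiction is complete and $\fr$ is normal on $D$.
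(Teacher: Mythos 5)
Your overall architecture (Zalcman--Pang rescaling with exponent $-\sum t_j/(n+\sum n_j)$, exclusion of $M(g)\equiv p$ for the limit $g$, a value-distribution statement for a transcendental limit, a degree count for a rational limit, and Hurwitz to pull a $p$-point back to some $M(f_j)-p$) is exactly the route the paper takes (its Theorem \ref{thm2} together with Lemmas \ref{lem3} and \ref{lem4}; Theorem \ref{thm} is then an immediate corollary). But there are genuine gaps at precisely the points that carry the weight. First, your central Nevanlinna estimate is not the right one: the inequality you write contains no term counting the zeros of $g$, and even taken at face value it reads $(n+\sum n_j)T(r,g)\le(\sum t_j)\overline{N}(r,g)+(\sum t_j+2)T(r,g)+o(T(r,g))$, which, since the Zalcman limit may well have $\overline{N}(r,g)$ comparable to $T(r,g)$, yields a contradiction only when $n+\sum n_j>2\sum t_j+2$ --- strictly stronger than hypothesis $(b)$ whenever $\sum t_j\ge 1$, i.e.\ always --- and you offer no mechanism to ``control the pole contribution.'' The estimate that actually makes the threshold $n+\sum n_j\ge 3+\sum t_j$ work is the paper's Lemma \ref{lem2} (due to Dethloff, Tan and Thin): $T(r,g)\le\frac{\theta(P)+1}{d(P)-1}\overline{N}\left(r,\frac{1}{g}\right)+\frac{1}{d(P)-1}\overline{N}\left(r,\frac{1}{P-p}\right)+S(r,g)$ with $d(P)=n+\sum n_j$ and $\theta(P)=\sum t_j$; once the value $p$ is omitted, condition $(b)$ makes the coefficient $\frac{\sum t_j+1}{n+\sum n_j-1}<1$. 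That lemma, whose proof controls the zeros contributed by the factors $(g^{n_j})^{(t_j)}$, is a substantive input that your sketch neither supplies nor replaces.

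Second, the rational case is dismissed too quickly: it is false in general that a nonconstant rational $M(g)$ must take the value $p$ in $\C$ --- a rational function can omit a finite nonzero value on the plane (for instance $p+1/\zeta$), attaining it only at $\infty$. Ruling this out for the specific monomial $g^n(g^{n_1})^{(t_1)}\cdots(g^{n_k})^{(t_k)}$ under $(a)$ and $(b)$ is exactly the content of the paper's Lemma \ref{lem4}, which requires a multi-case comparison of degrees at the zeros, the poles and at infinity (including the ``no $p$-point'' case) using Lemmas \ref{lemma2} and \ref{lemma3}; your ``elementary degree count'' is precisely this work, and its conclusion is not the one-line statement you give. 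Minor further points: excluding $M(g)\equiv p$ in the transcendental case needs the Clunie--Hayman step giving $g=e^{c\zeta+d}$ (Lemma \ref{lemmaCH}), which you do not carry out, and $(b)$ gives the rescaling exponent strictly less than $1$ in modulus, not merely $\le 1$, which is what Lemma \ref{lem1} requires.
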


 For the case of holomorphic functions they proved the following strengthened version:
 \begin{theorem}
 Let $p\neq 0$ be a complex number, let $n$ be a non-negative integer and $n_1, n_2,\ldots, n_k,$  $t_1, t_2,\ldots, t_k$ be positive integers. Let $\fr$ be a family of holomorphic functions in a domain $D$ such that for every $f\in \fr, f^n(f^{n_1})^{(t_1)}\ldots (f^{n_k})^{(t_k)}-p$ is nowhere vanishing on $D$. Assume that
 \begin{enumerate}
 \item[$(a)$]{ $n_j \geq t_j$ for all $1\leq j \leq k,$}
 \item [$(b)$]{$n+\sum_{j=1}^{k}n_j\geq 2 + \sum_{j=1}^{k}t_j.$}
 \end{enumerate}
  Then $\fr$ is normal on $D$.
 \end{theorem}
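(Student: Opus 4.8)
The plan is to argue by contradiction via a rescaling (Zalcman--Pang) argument, following the scheme of Theorem \ref{thm} but exploiting that the limit function is now entire. Since normality is local, I may assume $D$ is a disc and that $\fr$ fails to be normal at some $z_0\in D$. Put $N_0=n+\sum_{j=1}^k n_j$ and $T_0=\sum_{j=1}^k t_j$, so hypothesis $(b)$ reads $N_0\ge T_0+2$; in particular $0<T_0<N_0$. I would apply the Zalcman--Pang rescaling lemma with exponent $\alpha=T_0/N_0\in(0,1)$: there exist $f_m\in\fr$, points $z_m\to z_0$ and radii $\rho_m\to 0^+$ such that $g_m(\zeta):=\rho_m^{-\alpha}f_m(z_m+\rho_m\zeta)$ converges locally uniformly on $\C$ to a nonconstant entire function $g$ of finite order (entirety of $g$ uses that the $f_m$ are holomorphic, so a spherically locally uniform limit is holomorphic or $\equiv\infty$, the latter excluded by $g^{\#}(0)=1$). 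The reason for this particular $\alpha$ is the scaling identity: the differential monomial scales by $\rho_m^{\alpha N_0-T_0}=\rho_m^{0}=1$, so
\[
g_m^n (g_m^{n_1})^{(t_1)}\cdots (g_m^{n_k})^{(t_k)}(\zeta)=\bigl[f_m^n (f_m^{n_1})^{(t_1)}\cdots (f_m^{n_k})^{(t_k)}\bigr](z_m+\rho_m\zeta),
\]
which never takes the value $p$ on its domain. Letting $m\to\infty$ (derivatives and products converge locally uniformly since $g$ is entire), $G:=g^n (g^{n_1})^{(t_1)}\cdots (g^{n_k})^{(t_k)}$ is a locally uniform limit of functions omitting $p$, so by Hurwitz either $G\equiv p$ or $G$ omits $p$ on $\C$.

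Next I would dispose of the degenerate cases so as to get a clean equation. $G\equiv 0$ is impossible since $g$ is nonconstant and $(a)$ forces each $(g^{n_j})^{(t_j)}\not\equiv 0$. If $G$ equals a nonzero constant then $g$ is zero-free (when $n\ge1$), hence $g=e^{L}$ with $L$ a nonconstant polynomial, so $G=e^{N_0L}\cdot(\text{nonzero polynomial})$ is transcendental, not constant; the only remaining subcase, $n=0$ with all zeros of $g$ simple and $n_j=t_j$ for every $j$, would give $N_0=T_0$, contradicting $(b)$. Thus $G$ is a nonconstant entire function omitting $p$; since a nonconstant polynomial omits no value, $g$ must be transcendental. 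As $G-p$ is then a zero-free entire function of finite order, $G=p+e^{\beta}$ for some nonconstant polynomial $\beta$, i.e.
\[
g^n (g^{n_1})^{(t_1)}\cdots (g^{n_k})^{(t_k)}=p+e^{\beta}.
\]

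The heart of the proof is to contradict this identity. First I would show $g$ has only finitely many zeros: if $z_0$ is a zero of $g$ of order $m$, then by $(a)$ each $(g^{n_j})^{(t_j)}$ vanishes at $z_0$ to order $n_jm-t_j$, so $G$ vanishes there to order $\mu=N_0m-T_0\ge N_0-T_0\ge 2$, while $G-p=e^{\beta}$ is nonvanishing at $z_0$; hence from $G'=\beta'(G-p)$ we get $\operatorname{ord}_{z_0}\beta'=\mu-1=N_0m-T_0-1$. Since $\beta'$ is a polynomial it has finitely many zeros, so for all but finitely many zeros $z_0$ of $g$ we would need $N_0m-T_0-1=0$, i.e. $m=(T_0+1)/N_0<1$ by $(b)$ --- impossible. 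Therefore $g$ has finitely many zeros, so by Hadamard factorization $g=Q_1e^{Q_2}$ with $Q_1$ a polynomial and $Q_2$ a polynomial of degree $\ge1$. Expanding each derivative, $(g^{n_j})^{(t_j)}=e^{n_jQ_2}P_j$ with $P_j\not\equiv 0$ a polynomial, whence $G=e^{N_0Q_2}P$ with $P=Q_1^{\,n}\prod_jP_j\not\equiv 0$ a polynomial; thus $Pe^{N_0Q_2}-e^{\beta}-p\equiv 0$. Applying a theorem of Borel on linear combinations of exponentials to this three-term identity (exponents $N_0Q_2,\beta,0$), and noting $N_0Q_2$ is nonconstant, the only ways to avoid the absurdity that the coefficient $-1$ of $e^{\beta}$ vanishes are that $\beta$ is constant (then $G$ is constant, already excluded) or that $\beta-N_0Q_2$ is constant, in which case $e^{N_0Q_2}(P-e^{c})=p$ forces a transcendental entire function to equal the nonzero constant $p$ --- again impossible. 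This contradiction shows $\fr$ is normal at $z_0$; being local, $\fr$ is normal on $D$.

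The step I expect to be the main obstacle is this ``finitely many zeros'' reduction: setting up the order bookkeeping at the zeros of $g$ correctly and verifying that it is precisely hypothesis $(b)$ (not a weaker inequality) that renders $m=(T_0+1)/N_0$ impossible. Hypothesis $(a)$ enters by guaranteeing that $(g^{n_j})^{(t_j)}$ actually vanishes, to the expected order, at the zeros of $g$ (and similarly in the degenerate cases). The improvement over the meromorphic Theorem \ref{thm}, namely $n+\sum n_j\ge 2+\sum t_j$ in place of $n+\sum n_j\ge 3+\sum t_j$, comes exactly from $g$ being entire: there are no poles of $g$ to account for in the counting, which is worth one unit in the inequality.
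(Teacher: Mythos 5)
Your proposal is correct in substance, but it takes a genuinely different route from the paper's. The paper does not prove this statement directly: it is quoted as a result of Dethloff et al.\ \cite{dethloff}, and inside the paper's own framework it is an immediate corollary of Theorem \ref{thm4}, because when $f^n(f^{n_1})^{(t_1)}\cdots(f^{n_k})^{(t_k)}-p$ is nowhere vanishing for every member, any two members of $\fr$ share $p$ IM vacuously; the proof of Theorem \ref{thm4} (said to be ``similar to Theorem \ref{thm1}'') runs the same Zalcman--Pang rescaling you use, but then quotes value-distribution input --- Lemma \ref{lem5}, itself derived from the Nevanlinna-theoretic Lemma \ref{lem2} of \cite{dethloff}, together with the (unstated) polynomial analogue of Lemma \ref{lem4} --- to produce a zero of the limit monomial minus $p$, which Hurwitz transfers back to some $f_j$. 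You instead prove the needed nonexistence statement from scratch: after Hurwitz gives that $G=g^n(g^{n_1})^{(t_1)}\cdots(g^{n_k})^{(t_k)}$ is either $\equiv p$ or omits $p$, you rule out the constant cases, write $G-p=e^{\beta}$, use the order bookkeeping at zeros of $g$ (this is exactly where the constant $2$ in hypothesis $(b)$ is spent, via $N_0m-T_0-1\geq 1$) to show $g$ has finitely many zeros, and conclude with Hadamard factorization and Borel's theorem on exponential sums. Your route buys independence from the second-main-theorem machinery of \cite{dethloff}, at the cost of invoking Hadamard and Borel; the paper's route buys brevity by citation. Two small points to tighten: the finite order of $g$ (hence of $G$) should be attributed to Lemma \ref{lemmaCH}, since the rescaling lemma gives $g^{\#}$ bounded --- this in fact yields order at most $1$, so $\beta$ and $Q_2$ are linear and you could dispense with Borel by comparing linear exponents directly; and in the final subcase $\beta-N_0Q_2=c$ you should also record that $P-e^{c}\equiv 0$ is impossible, since it would force $p=0$.
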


 The main aim of this paper is to find  normality criteria in terms of sharing values which is motivated by \cite{dethloff}.

\begin{theorem}\label{thm1}
 Let $ p\neq 0 $ be a complex number,  $n$ be  a non-negative integer and $n_1, n_2,\ldots, n_k,$ $ t_1, t_2,\ldots, t_k $ be  positive integers such that
  \begin{enumerate}
 \item[$(a)$]{ $n_j \geq t_j$ for all $1\leq j \leq k,$}
 \item [$(b)$]{$n+\sum_{j=1}^{k}n_j\geq 3+ \sum_{j=1}^{k}t_j.$}
 \end{enumerate}
Let $\fr$ be a family of meromorphic functions in a domain $D$ such that for every pair of functions $f, g \in \fr,  f^n(f^{n_1})^{(t_1)}\ldots(f^{n_k})^{(t_k)}$ and $g^n(g^{n_1})^{(t_1)}\ldots(g^{n_k})^{(t_k)}$ share $p$ IM on $D$. Then $\fr$ is normal in $D$.
 \end{theorem}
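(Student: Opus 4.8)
The plan is to reduce the statement, by a standard rescaling argument, to a value-distribution fact about a single meromorphic function on $\C$, which I identify as the real work. Throughout write $M(f)=\f$, $N=n+\sum_{j=1}^{k}n_j$ and $T=\sum_{j=1}^{k}t_j$; conditions (a) and (b) give $N\ge T+3$, so $0<\alpha:=T/N<1$. Two order counts will be used constantly: if a meromorphic function $h$ has a zero of order $m\ge1$ at a point, then --- since $n_j\ge t_j$ makes $h^{n_j}$ vanish there to order $n_jm\ge t_j$, hence $(h^{n_j})^{(t_j)}$ vanish to order exactly $n_jm-t_j\ge0$ --- $M(h)$ vanishes there to order $mN-T\ge N-T\ge3$; and a pole of $h$ of order $m$ gives $M(h)$ a pole of order $mN+T>0$. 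In particular $M(h)\equiv p$ (a nonzero constant) forces $h$ to be entire and zero-free, so $h=e^{q}$ on $\C$; if moreover $h$ has finite order then $q$ is a nonconstant polynomial, and writing $(h^{n_j})^{(t_j)}=P_j\,e^{n_jq}$ with $P_j$ a differential polynomial in $q$ (hence a polynomial in the variable) whose leading term $(n_jq')^{t_j}$ is not identically zero, one gets $M(h)=e^{Nq}\prod_jP_j$, which is transcendental --- a contradiction. So I first record: \emph{no nonconstant finite-order meromorphic function $h$ on $\C$ satisfies $M(h)\equiv p$}; and, since in that situation each function is entire and zero-free, the holomorphic form of Zalcman's lemma together with the same computation shows that if every $f\in\fr$ has $M(f)\equiv p$ then $\fr$ is already normal.

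Normality being local, I fix $z_0\in D$ and, by the last remark, may assume some --- hence, by IM-sharing, every --- $f\in\fr$ has $M(f)\not\equiv p$. Then the zero set $E:=\{z\in D:M(f)(z)=p\}$ is the same for all $f\in\fr$ and is discrete and closed in $D$. If $z_0\notin E$, I take a disc $D(z_0,\delta)$ with $\overline{D(z_0,\delta)}\subset D$ and $D(z_0,\delta)\cap E=\varnothing$; there $M(f)-p$ is zero-free for every $f\in\fr$, so Theorem~\ref{thm} yields normality at $z_0$. If $z_0\in E$, I take $\delta>0$ with $\overline{D(z_0,\delta)}\subset D$ and $D(z_0,\delta)\cap E=\{z_0\}$; then the previous case, applied at each point of the punctured disc, shows $\fr$ is normal on $D(z_0,\delta)\setminus\{z_0\}$, while $M(f)(z_0)=p$ and $M(f)(z)\ne p$ for $0<|z-z_0|<\delta$, for every $f\in\fr$.

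Now suppose, toward a contradiction, that $\fr$ is not normal at this $z_0\in E$. Since $0<\alpha<1$, the Zalcman--Pang lemma provides $f_m\in\fr$, $z_m\to z_0$ and $\rho_m\to0^{+}$ with $g_m(\zeta):=\rho_m^{-\alpha}f_m(z_m+\rho_m\zeta)\to g(\zeta)$ spherically locally uniformly on $\C$, where $g$ is a nonconstant meromorphic function of finite order. The exponent $\alpha=T/N$ is chosen so that the $(n_j,t_j)$-weights cancel under the scaling; differentiating the scaling relation gives $M(f_m)(z_m+\rho_m\zeta)=M(g_m)(\zeta)$, so $M(g_m)\to M(g)$ locally uniformly off the poles of $g$. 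By the fact recorded above, $M(g)\not\equiv p$. Moreover $M(g)-p$ has at most one zero in $\C$: a pole of $g$ is a pole of $M(g)$, hence not such a zero; and if $\zeta_1\ne\zeta_2$ were two zeros of $M(g)-p$, then Hurwitz's theorem applied to $M(g_m)-p\to M(g)-p$ near each $\zeta_i$ would produce, for large $m$, distinct zeros $\zeta_{i,m}\to\zeta_i$ of $M(g_m)-p$; but $M(g_m)(\zeta)=p$ is equivalent to $z_m+\rho_m\zeta\in E\cap D(z_0,\delta)=\{z_0\}$, i.e.\ to $\zeta=(z_0-z_m)/\rho_m$ --- a single value, a contradiction.

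Thus the proof hinges on the following lemma, which I expect to be the main obstacle: \emph{if $g$ is a nonconstant meromorphic function on $\C$ satisfying (a) and (b), then $M(g)-p$ has at least two distinct zeros in $\C$.} I would prove it in two cases. If $g$ is rational, I would tally the zeros and poles of $M(g)$ and of $M(g)-p$ on $\mathbb{P}^{1}$ --- each zero of $g$ forces a zero of $M(g)$ of order $\ge N-T\ge3$, each pole of $g$ forces a pole, and each factor $(g^{n_j})^{(t_j)}$ contributes at most $t_j$ extra zeros per pole of $g$ --- and check that demanding $M(g)-p$ have at most one zero forces, because $p\ne0$, that $g$ be constant. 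If $g$ is transcendental, "$M(g)-p$ has at most one zero" forces $\overline N\big(r,1/(M(g)-p)\big)=O(\log r)$, making $p$ a completely deficient value of $M(g)$; but the Nevanlinna estimates for differential monomials of this shape --- the same ones that underlie Theorem~\ref{thm} --- force $\overline N\big(r,1/(M(g)-p)\big)$ to absorb a positive proportion of $T(r,M(g))$, again a contradiction. Granting the lemma, the non-normality assumption is untenable, so $\fr$ is normal at $z_0$, which proves the theorem.
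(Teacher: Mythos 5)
Your overall architecture coincides with the paper's: rescale at a point of non-normality via the Zalcman--Pang lemma with exponent $-\sum_{j}t_j/(n+\sum_j n_j)$, rule out $g^n(g^{n_1})^{(t_1)}\ldots(g^{n_k})^{(t_k)}\equiv p$ by the zero-free entire/exponential computation, and then contradict the sharing hypothesis through Hurwitz once the limit monomial is known to have at least two distinct $p$-points. Your way of closing the argument is a legitimate (and in places cleaner) variant: you make explicit the degenerate case $M(f)\equiv p$, which the paper passes over, you use that IM-sharing gives a common discrete $p$-point set $E$, localize so that $E\cap\Delta(z_0,\delta)=\{z_0\}$, and obtain the contradiction from a single member, whereas the paper fixes $m$, lets $j\to\infty$ and argues $\zeta_j=\zeta_j^{*}=-z_j/\rho_j$; invoking Theorem \ref{thm} for the case $z_0\notin E$ is also fine, since that theorem is quoted from Dethloff--Tan--Thin rather than proved here.

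The genuine gap is the statement you yourself call the main obstacle: for every nonconstant meromorphic $g$ on $\C$ satisfying (a) and (b), $g^n(g^{n_1})^{(t_1)}\ldots(g^{n_k})^{(t_k)}-p$ has at least two distinct zeros. This is exactly where the paper does essentially all of its work (Lemma \ref{lem3} for transcendental $g$, Lemma \ref{lem4} for rational $g$), and your proposal only sketches it. The transcendental half of your sketch would go through: citing the value-distribution inequality of Dethloff et al.\ (Lemma \ref{lem2}) with $d(P)=n+\sum_j n_j$ and $\theta(P)=\sum_j t_j$, hypothesis (b) makes the coefficient of $T(r,f)$ positive and forces infinitely many $p$-points, which is Lemma \ref{lem3}. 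But the rational case is not a matter of simply tallying zeros and poles on the sphere ``and checking'': the paper must treat separately polynomials, the case of no $p$-point, and the case of exactly one $p$-point, and in the last case compare degrees of numerator and denominator of $\f-p$ and of its derivative in three subcases ($l$ less than, equal to, or greater than $N+\sum_i N_i+tt'$ in its notation), using the degree inequalities for derivatives of rational functions (Lemmas \ref{lemma2} and \ref{lemma3}); conditions (a) and (b) enter at several non-obvious points, and the bookkeeping occupies roughly two pages. Your one-sentence plan gives no indication of how that count closes, so as written the proposal reduces Theorem \ref{thm1} to an unproved lemma rather than proving it; supplying the rational-case argument (or an explicit reference for it) is what is missing.
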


 For  families of holomorphic functions we have the following strengthened version:
\begin{theorem}\label{thm4}
Let $ p\neq 0 $ be a complex number, $n$ be a non-negative integer and $n_1, n_2,\ldots, n_k,$ $ t_1, t_2,\ldots, t_k $ be  positive integers such that
   \begin{enumerate}
 \item[$(a)$]{ $n_j \geq t_j$ for all $1\leq j \leq k,$}
 \item [$(b)$]{$n+\sum_{j=1}^{k}n_j\geq 2+ \sum_{j=1}^{k}t_j.$}
 \end{enumerate}
Let $\fr$ be a family of holomorphic functions in a domain $D$ such that for every pair of functions $f, g \in \fr,  f^n(f^{n_1})^{(t_1)}\ldots(f^{n_k})^{(t_k)}$ and $g^n(g^{n_1})^{(t_1)}\ldots(g^{n_k})^{(t_k)}$ share $p$ IM on $D$. Then $\fr$ is normal in $D$.
 \end{theorem}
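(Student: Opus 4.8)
Throughout write $L(f)$ for the differential monomial $\f$; the hypothesis says $L(f)(z)=p\iff L(g)(z)=p$ for all $f,g\in\fr$, so the set $Z:=\{z\in D:L(f)(z)=p\}$ does not depend on $f$. Since normality is a local property and a one‑element family is trivially normal, fix $z_0\in D$, assume $\#\fr\ge2$, and aim to show $\fr$ is normal at $z_0$.

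First I would dispose of the case in which some $f_0\in\fr$ satisfies $L(f_0)(z_0)\neq p$: then $L(f_0)\neq p$ on a disc $\Delta=D(z_0,\delta)\subset D$, hence by the IM‑sharing every $L(f)$ omits $p$ on $\Delta$, and $\fr$ is normal on $\Delta$ by the holomorphic version of Theorem~\ref{thm}. So assume $L(f)(z_0)=p$ for all $f\in\fr$, i.e. $z_0\in Z$. If $Z=D$ then every $L(f)\equiv p$; picking $q\in\C\setminus\{0,p\}$ gives $L(f)-q\equiv p-q\neq0$ nowhere vanishing, and the same theorem with the value $q$ yields normality on $D$. Otherwise each $L(f)\not\equiv p$, so $Z$ is discrete; choose $\delta$ with $\overline{D(z_0,\delta)}\subset D$ and $Z\cap\overline{D(z_0,\delta)}=\{z_0\}$. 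On $D':=D(z_0,\delta)\setminus\{z_0\}$ every $L(f)$ omits $p$, so $\fr$ is normal on $D'$, again by the holomorphic version of Theorem~\ref{thm}; it remains only to promote this to normality at $z_0$.

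For this, suppose $\fr$ is not normal at $z_0$ and apply the rescaling (Pang--Zalcman) form of Zalcman's lemma with exponent $\alpha=(\sum_j t_j)/(n+\sum_j n_j)$; this is legitimate because $0\le\alpha<1$ by $(b)$ and the members of $\fr$, being holomorphic, have zeros of multiplicity $\ge1>\alpha$ and no poles. One obtains $f_j\in\fr$, $z_j\to z_0$, $\rho_j\to0^{+}$ with $g_j(\zeta):=\rho_j^{-\alpha}f_j(z_j+\rho_j\zeta)\to g(\zeta)$ spherically locally uniformly on $\C$, where $g$ (the uniform limit of the holomorphic $g_j$) is a nonconstant entire function of finite order with bounded spherical derivative. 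The exponent is chosen exactly so that $L(g_j)(\zeta)=\rho_j^{\sum_j t_j-\alpha(n+\sum_j n_j)}L(f_j)(z_j+\rho_j\zeta)=L(f_j)(z_j+\rho_j\zeta)$, and since $g_j\to g$ locally uniformly together with all derivatives, $L(g_j)\to L(g)$ locally uniformly on $\C$. For a fixed compact $K$ and $j$ large we have $z_j+\rho_j\zeta\in D(z_0,\delta)$ for $\zeta\in K$, so $L(g_j)(\zeta)=p$ forces $z_j+\rho_j\zeta\in Z$, i.e. $\zeta=\eta_j:=(z_0-z_j)/\rho_j$. Passing to a subsequence with $\eta_j\to\eta^{*}\in\widehat{\C}$ and invoking Hurwitz's theorem, either $L(g)\equiv p$ or $L(g)-p$ vanishes — ignoring multiplicity — only at $\eta^{*}$ (and not at all if $\eta^{*}=\infty$); in every case $L(g)-p$ has at most one zero ignoring multiplicity.

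The heart of the matter, and the step I expect to be the real obstacle, is ruling this out for a nonconstant entire $g$ obeying $(a)$ and $(b)$. The case $L(g)\equiv p$ is quick: then $g$ is zero‑free, so $g=e^{\psi}$ with $\psi$ a polynomial (finite order), and writing $L(g)=e^{(n+\sum n_j)\psi}P$ with $P$ a nonzero differential polynomial in the derivatives of $\psi$, the relation $(n+\sum n_j)\psi'+P'/P\equiv0$ forces $\psi'\equiv0$, since $P'/P\to0$ at $\infty$ while a nonzero $\psi'$ would be a polynomial not tending to $0$ — contradicting nonconstancy. If $g$ is a nonconstant polynomial of degree $d$, then $L(g)$ is a polynomial of degree $d(n+\sum n_j)-\sum t_j\ge2$ by $(b)$; were $L(g)-p$ to have a single zero ignoring multiplicity we could write $L(g)-p=c(z-a)^{m}$, so that all $m$ zeros of $L(g)$ are simple and hence lie off the zero set of $g$ (a zero of $g$ of order $\mu$ makes $L(g)$ vanish to order $\mu(n+\sum n_j)-\sum t_j\ge2$), forcing $g$ to be zero‑free — impossible for a nonconstant polynomial — while $L(g)-p$ cannot be zero‑free, being a nonconstant polynomial. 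For transcendental $g$ one falls back on the Hayman/Picard‑type value‑distribution statement for entire functions that already underlies the holomorphic version of Theorem~\ref{thm} (cf.~\cite{dethloff,Hay}), which rules this out under $(a)$, $(b)$. This contradiction finishes the remaining case, and with it the theorem. The holomorphic sharpening of $(b)$ to $2+\sum t_j$ enters precisely here — in the degree bound for $L(g)$ and in the entire‑function input — exactly as in the omitted‑value theorem it generalizes.
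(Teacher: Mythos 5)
Your argument is correct, but it uses the sharing hypothesis in a genuinely different way from the paper. The paper's proof of Theorem~\ref{thm4} (declared ``similar to the proof of Theorem~\ref{thm1}'') keeps the sharing for the endgame of the rescaling argument: after showing that the limit monomial is $\not\equiv p$, it invokes the value-distribution lemmas to get \emph{two} distinct zeros of $g^n(g^{n_1})^{(t_1)}\ldots(g^{n_k})^{(t_k)}-p$, uses Hurwitz to produce two sequences of $p$-points of the rescaled functions, transfers them by IM-sharing to one fixed $f_m$, and then argues from the isolatedness of the $p$-points of $f_m^n(f_m^{n_1})^{(t_1)}\ldots(f_m^{n_k})^{(t_k)}$ that $\zeta_j=\zeta_j^{*}=-z_j/\rho_j$, contradicting the disjointness of the two discs. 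You instead cash in the sharing at the very start: the common $p$-set $Z$ is independent of $f$, so either all members omit $p$ near $z_0$, or $Z=D$, or $Z$ is discrete; the first two cases are settled by the holomorphic omitted-value criterion (the holomorphic companion of Theorem~\ref{thm}), and in the remaining case you reduce to at most one $p$-point in a small disc and run what is essentially the argument of Theorem~\ref{thm2}, showing via Hurwitz that the Zalcman limit $g$ would make $L(g)-p$ (your notation) have at most one zero IM, and then excluding that. What your route buys: it avoids the paper's somewhat delicate ``fix $m$, let $j\to\infty$, the zeros are isolated'' step, and, more substantively, you actually prove the polynomial case under the holomorphic bound $n+\sum_{j}n_j\ge 2+\sum_{j}t_j$ (all zeros of $L(g)$ simple versus multiplicity at least $2$ at any zero of $g$), whereas the paper only records Lemma~\ref{lem4} under the bound $3+\sum_j t_j$ and leaves its holomorphic analogue implicit; on the transcendental side, the ``value-distribution statement for entire functions'' you defer to is exactly the paper's Lemma~\ref{lem5}, so that ingredient is common to both proofs, and your zero-free case $L(g)\equiv p$ is handled essentially as in the paper (finite order plus $g=e^{\psi}$). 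What the paper's route buys is brevity: no case analysis on $Z$ and no separate at-most-one-zero discussion, since two distinct zeros plus sharing yield the contradiction directly.
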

The following examples show that the condition on $p$ is necessary.
\begin{example}
Let $\fr=\{e^{mz}: m= 1, 2, \ldots\}$ be a family on $\Delta:=\{z:|z|<1\}.$ Let $n, n_i's$ and $ t_i's$ be as in Theorem \ref{thm1}. Then for every pair $f, g \in \fr, $ $ f^n(f^{n_1})^{(t_1)}\ldots(f^{n_k})^{(t_k)}$ and $g^n(g^{n_1})^{(t_1)}\ldots(g^{n_k})^{(t_k)}$ share 0 and $\infty$. But $\fr$ is not normal.
\end{example}
\begin{example}
  Let $\fr=\{mz: m= 1, 2, \ldots\}$ be a family on $\Delta:=\{z:|z|<1\}.$ Let $n, n_i's$ $ t_i's$ be as in Theorem \ref{thm1}. Then for every pair $f, g \in \fr,$ $  f^n(f^{n_1})^{(t_1)}\ldots(f^{n_k})^{(t_k)}$ and $g^n(g^{n_1})^{(t_1)}\ldots(g^{n_k})^{(t_k)}$ share 0 and $\infty$. But $\fr$ is not normal.
\end{example}
The following example supports our result.
\begin{example}
Let $\fr=\{f_n: n\in \mathbb N\},$ where $f_n(z)=n $. Then $\fr$ satisfies conditions of Theorem \ref{thm1} and $\fr$ is normal.
\end{example}
It is natural to ask what  happens if we have a zero of $f^n(f^{n_1})^{(t_1)}\ldots(f^{n_k})^{(t_k)}- p$. For this question we can extend Theorem \ref{thm} in the following manner.
\begin{theorem}\label{thm2}
Let $ p\neq 0 $ be a complex number,  $n$ be  a non-negative integer and $n_1, n_2,\ldots, n_k,$ $ t_1, t_2,\ldots, t_k $ be  positive integers such that
   \begin{enumerate}
 \item[$(a)$]{ $n_j \geq t_j$ for all $1\leq j \leq k,$}
 \item [$(b)$]{$n+\sum_{j=1}^{k}n_j\geq 3+ \sum_{j=1}^{k}t_j.$}
 \end{enumerate}
Let $\fr$ be a family of meromorphic functions in a domain $D$ such that for every $f\in\fr, f^n(f^{n_1})^{(t_1)}\ldots(f^{n_k})^{(t_k)}- p$ has at most one zero IM. Then $\fr$ is normal in $D$.
\end{theorem}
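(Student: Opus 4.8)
The plan is to argue by contradiction from the failure of normality, reduce via a rescaling argument to a value-distribution statement on $\C$, and then dispose of that statement by a Hayman--Clunie type lemma. For a meromorphic function $h$ write $\Phi_h:=h^{n}(h^{n_1})^{(t_1)}\cdots(h^{n_k})^{(t_k)}$, and set $\delta:=n+\sum_{j=1}^{k}n_j$ and $\sigma:=\sum_{j=1}^{k}t_j$, so that hypothesis $(b)$ reads $\delta\ge\sigma+3$; in particular $0<\sigma<\delta$ and $\alpha:=-\sigma/\delta\in(-1,0)$. Since normality is a local property, suppose $\fr$ is not normal at some $z_0\in D$. Applying the $\alpha$-weighted Zalcman--Pang rescaling lemma (available since $-1<\alpha<1$ and the members of $\fr$ carry no multiplicity restriction) yields $f_j\in\fr$, points $z_j\to z_0$ and numbers $\rho_j\to0^{+}$ such that
\[
g_j(\zeta):=\rho_j^{-\sigma/\delta}\,f_j(z_j+\rho_j\zeta)\longrightarrow g(\zeta)
\]
spherically uniformly on compact subsets of $\C$, where $g$ is a non-constant meromorphic function on $\C$ (necessarily of finite order).

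The next step is to transport $\Phi$ to the limit. A direct application of the chain rule gives, for each $j$,
\[
\Phi_{f_j}(z_j+\rho_j\zeta)=\rho_j^{-\alpha\delta-\sigma}\,\Phi_{g_j}(\zeta),
\]
and the exponent $-\alpha\delta-\sigma$ vanishes precisely because $\alpha=-\sigma/\delta$ --- this is the reason for that choice of weight. Since $g_j\to g$ spherically and locally uniformly, $\Phi_{g_j}\to\Phi_g$ spherically and locally uniformly on $\C$ (with the routine checks that the derivatives converge off the poles of $g$ and that $\Phi_g$ acquires a pole at each pole of $g$), and hence $\Phi_{f_j}(z_j+\rho_j\zeta)-p\to\Phi_g(\zeta)-p$. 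Because $\Phi_{f_j}-p$ has at most one zero in $D$, for every $R>0$ and all large $j$ the function $\zeta\mapsto\Phi_{f_j}(z_j+\rho_j\zeta)-p$ has at most one zero in $\{|\zeta|<R\}$; letting $R\to\infty$ and appealing to Hurwitz's theorem, I conclude that \emph{either} $\Phi_g\equiv p$ \emph{or} $\Phi_g-p$ has at most one distinct zero in $\C$.

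It then remains to exclude both possibilities, i.e.\ to prove the key lemma: \emph{if $g$ is a non-constant meromorphic function on $\C$ and $(a)$, $(b)$ hold, then $\Phi_g$ is non-constant and $\Phi_g-p$ has at least two distinct zeros} (the weaker conclusion that $\Phi_g-p$ has at least one zero is what underlies Theorem~\ref{thm}; here the stronger two-zero statement is needed). The case where $\Phi_g$ is constant is immediate: a zero of $g$ of order $m$ produces a zero of $\Phi_g$ of order $\ge m\delta-\sigma\ge\delta-\sigma>0$ (using $(a)$) and a pole of $g$ produces a pole of $\Phi_g$, so a constant $\Phi_g$ would force $g$ to be zero- and pole-free, $g=e^{h}$; then $\Phi_g=e^{\delta h}\prod_{j}R_j(h',\dots,h^{(t_j)})$ with each $R_j$ entire and $T(r,R_j)=S(r,g)$, and comparing Nevanlinna characteristics forces $h$, hence $g$, constant. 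For the two-zero statement: when $g$ is transcendental this is a Hayman--Clunie type theorem (generalising Hayman's result that $g^{n}g'-c$ has infinitely many zeros for $n\ge3$), obtained by applying the Second Fundamental Theorem to $\Phi_g$ and estimating $\overline N(r,\Phi_g)$, $\overline N(r,1/\Phi_g)$ and the proximity functions $m\big(r,(g^{n_j})^{(t_j)}/g^{n_j}\big)$ with the help of $n_j\ge t_j$ and $\delta\ge\sigma+3$; when $g$ is a non-constant rational function it follows from an elementary degree count, again using $(a)$, $(b)$ and $p\neq0$ (a single zero of $\Phi_g-p$ would force the numerator of $\Phi_g-p$ to be a pure power, which the differential-monomial shape of $\Phi_g$ forbids). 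With the lemma established, both alternatives from the previous paragraph are impossible, so $\fr$ must be normal on $D$.

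The step I expect to be the main obstacle is the transcendental case of the key lemma --- squeezing out \emph{two} distinct zeros (in fact infinitely many) of $\Phi_g-p$ rather than merely one. The delicate point is the sharp bookkeeping: one must estimate $\overline N(r,g)$, $\overline N(r,1/g)$ and the terms $m\big(r,(g^{n_j})^{(t_j)}/g^{n_j}\big)$ precisely enough, under exactly the hypotheses $n_j\ge t_j$ and $\delta\ge\sigma+3$, for the Second Fundamental Theorem applied to $\Phi_g$ to contradict $\overline N(r,1/(\Phi_g-p))=S(r)$. Two lesser points also need care: justifying the $\alpha$-weighted Zalcman--Pang lemma for the non-integer weight $\alpha=-\sigma/\delta$ together with the convergence of the monomials $\Phi_{g_j}$ in neighbourhoods of the poles of $g$; and, in the Hurwitz step, recalling that the hypothesis bounds only the number of \emph{distinct} zeros --- which is exactly why the lemma must deliver two distinct zeros rather than one.
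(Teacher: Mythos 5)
Your overall framework coincides with the paper's: writing $\Phi_h=h^{n}(h^{n_1})^{(t_1)}\cdots(h^{n_k})^{(t_k)}$, you use the rescaling of Lemma \ref{lem1} with the weight $\alpha=-\sum_i t_i/(n+\sum_i n_i)$ so that $\Phi_{f_j}(z_j+\rho_j\zeta)=\Phi_{g_j}(\zeta)$, a Hurwitz argument to transfer the ``at most one zero IM'' hypothesis to the limit function, and an elimination of the degenerate case $\Phi_g\equiv p$ (you do the last step by a Nevanlinna characteristic comparison for $g=e^{h}$, whereas the paper uses the bounded spherical derivative together with Lemma \ref{lemmaCH} to write $g=\exp(c\zeta+d)$; either route works). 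So the reduction to a statement about non-constant meromorphic functions on $\C$ is correct and is the same reduction the paper makes.

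The genuine gap is the key lemma you defer: that for every non-constant meromorphic $g$ on $\C$ satisfying $(a)$ and $(b)$, $\Phi_g-p$ has at least two distinct zeros. This is exactly where the paper's real work lies, and your proposal asserts it rather than proves it. For transcendental $g$, the paper's Lemma \ref{lem3} obtains infinitely many $p$-points by invoking the Dethloff--Tan--Thin value-distribution inequality (Lemma \ref{lem2}) for differential polynomials with lower degree $d(P)\geq 2$; your plan of ``apply the second fundamental theorem to $\Phi_g$ with careful bookkeeping'' is the right genre, but that bookkeeping is precisely the content you have not carried out, as you yourself flag. More seriously, for rational $g$ your one-sentence justification --- that a single zero of $\Phi_g-p$ ``would force the numerator of $\Phi_g-p$ to be a pure power, which the differential-monomial shape of $\Phi_g$ forbids'' --- is not an argument and does not reflect how the contradiction actually arises: in the polynomial case the numerator genuinely \emph{is} a pure power $A(z-z_0)^l$, and the contradiction comes from comparing the zeros of $(\Phi_f)'$ with the zeros of $f$; in the non-polynomial rational case the paper's Lemma \ref{lem4} requires the explicit representations \eqref{eq2.2}--\eqref{eq2.5}, the degree lemmas \ref{lemma2} and \ref{lemma3}, and a three-subcase comparison of $l$ with $N+\sum_i N_i+tt'$, with hypotheses $(a)$, $(b)$ entering differently in each subcase (and separately for $n>2$ and $0\leq n\leq 2$). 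Until you supply proofs of these two statements (or cite them, e.g.\ the transcendental half from Dethloff et al.), your argument reduces the theorem to them but does not establish it.
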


\begin{remark}
Theorem \ref{thm} is an immediate corollary of Theorem \ref{thm1} and Theorem \ref{thm2}.
\end{remark}

\section{Some Notations}
 Let $\Delta=\{z: |z|<1\}$ be the unit disk and $\Delta(z_0, r):=\{z: |z-z_0|<r\}.$ We use the following standard functions of value distribution theory, namely
\begin{center}
$T(r,f),  m(r,f),  N(r,f)\ \text{and}\ \overline{N}(r,f)$.
\end{center}
We let $S(r,f)$ be any function satisfying
\begin{center}
$S(r,f)=o\big(T(r,f)\big)$,  as $r\rightarrow +\ity,$
\end{center}
 possibly outside of a set with finite measure.
\section{Some Lemmas}

 In order to prove our results we need  the following Lemmas. The following is a new version of Zalcman's Lemma (see \cite{Zalc 1, Zalc}).

\begin{lemma} \label{lem1}Let $\mathcal F$ be a family of meromorphic  functions in the unit disk  $\Delta$, with the property that for every function $f\in \mathcal F,$  the zeros of $f$ are of multiplicity at least $l$ and the poles of $f$ are of multiplicity at least $k$ . If $\mathcal F$ is not normal at $z_0$ in $\Delta$, then for  $-l< \alpha <k$, there exist
\begin{enumerate}
\item{ a sequence of complex numbers $z_n \rightarrow z_0$, $|z_n|<r<1$},
\item{ a sequence of functions $f_n\in \mathcal F$, }
\item{ a sequence of positive numbers $\rho_n \rightarrow 0$},
\end{enumerate}
such that $g_n(\zeta)=\rho_n^{\alpha}f_n(z_n+\rho_n\zeta) $ converges to a non-constant meromorphic function $g$ on $\C$ with $g^{\#}(\zeta)\leq g^{\#}(0)=1$. Moreover $g$ is of order at most two. Here $g^{\#}$ denote the spherical derivative of $g$.
\end{lemma}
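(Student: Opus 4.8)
The statement is a weighted form of Zalcman's rescaling lemma (of Pang--Zalcman type), and the plan is to run Zalcman's classical maximum--rescaling argument in this weighted setting. Since normality is a local property, after a translation I may assume $z_0=0$ and fix $r<1$ with $\overline{\Delta(0,r)}\subset\Delta$; it then suffices to produce the three sequences with $z_n\to0$ and $|z_n|<r$. Because $\fr$ is not normal at $0$, Marty's criterion shows that $\{f^{\#}:f\in\fr\}$ is not uniformly bounded on any neighbourhood of $0$, and a diagonal argument yields $f_n\in\fr$ together with points $w_n\to0$ such that $f_n^{\#}(w_n)\to\ity$.

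The core of the argument is the weighted rescaling. Writing $g_n(\zeta)=\rho_n^{\al}f_n(z_n+\rho_n\zeta)$ one computes
$$g_n^{\#}(\zeta)=\rho_n\cdot\frac{\rho_n^{\al}\,|f_n'(z_n+\rho_n\zeta)|}{1+\rho_n^{2\al}\,|f_n(z_n+\rho_n\zeta)|^{2}}\,,$$
so the object to be controlled is the weighted $\al$-derivative $z\mapsto\rho^{\al}|f_n'(z)|\big/\bigl(1+\rho^{2\al}|f_n(z)|^{2}\bigr)$. Following Pang's device, on shrinking closed discs $\overline{\Delta(w_n,\delta_n)}\subset\Delta$ (with $\delta_n\to0$ chosen so that $\delta_n^{2}f_n^{\#}(w_n)\to\ity$) I would maximise a suitable weight — of the type $\delta_n^{2}-|z-w_n|^{2}$, which vanishes on the boundary circle — times this weighted derivative, with the scale $\rho$ coupled to the maximiser. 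This produces a maximiser $z_n$ (with $|z_n-w_n|<\delta_n$, so $z_n\to0$) and a scale $\rho_n\to0^{+}$ for which $g_n^{\#}(0)\to1$, while the maximum property forces $g_n^{\#}(\zeta)\le1+o(1)$ on discs $|\zeta|\le R_n$ with $R_n\to\ity$. This is exactly the step where the hypotheses enter: $-l<\al<k$ together with the lower bounds $l$ and $k$ on the multiplicities of the zeros and poles of the members of $\fr$ guarantee that the weighted derivative does not degenerate near a zero or a pole of $f_n$, and that under the rescaling the zeros and poles of the $f_n$ neither collapse nor accumulate, so the limit is a genuine non-constant meromorphic function rather than a constant or a function with an essential singularity.

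With $g_n^{\#}(\zeta)\le1+o(1)$ on compact subsets of $\C$, Marty's criterion gives a subsequence along which $g_n\to g$ locally uniformly in the spherical metric, where $g$ is meromorphic on $\C$ or $g\equiv\ity$. Since $g^{\#}(0)=\lim g_n^{\#}(0)=1\neq0$, the case $g\equiv\ity$ is excluded and $g$ is non-constant, and letting $n\to\ity$ in the estimate gives $g^{\#}(\zeta)\le g^{\#}(0)=1$ for all $\zeta\in\C$ (taking $z_n$ to be a near-maximiser rather than an exact one is the standard refinement that turns $\le1+o(1)$ into $\le g_n^{\#}(0)$). Finally, for the order bound I would use $g^{\#}\le1$ in the Ahlfors--Shimizu form of the characteristic,
$$T_{0}(r,g)=\int_{0}^{r}\frac{dt}{t}\,\frac{1}{\pi}\int_{|z|<t}\bigl(g^{\#}(z)\bigr)^{2}\,dx\,dy\le\int_{0}^{r}\frac{t^{2}}{t}\,dt=\frac{r^{2}}{2}\,,$$
and since $T(r,g)=T_{0}(r,g)+O(1)$ this yields $\limsup_{r\to\ity}\dfrac{\log T(r,g)}{\log r}\le2$, i.e.\ $g$ has order at most two.

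The step I expect to be the main obstacle is the weighted maximum--rescaling: unlike the classical case $\al=0$, the object being maximised is not the spherical derivative itself but a scale-dependent weighted variant, and one must verify carefully — this is precisely where $-l<\al<k$ and the multiplicity bounds are used — both that a maximiser with scale $\rho_n\to0$ and the correct normalization exists and that the resulting limit $g$ is a non-constant meromorphic function on all of $\C$.
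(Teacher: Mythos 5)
The paper does not actually prove this lemma: it is stated as ``a new version of Zalcman's Lemma'' and quoted from the literature (it is the Pang--Zalcman rescaling lemma; see the Zalcman references cited there), so there is no in-paper proof to compare yours against, and I can only assess your sketch on its own terms. The peripheral parts of your outline are sound: Marty's criterion plus a diagonal argument to extract locally spherically uniform convergence, the exclusion of $g\equiv\infty$ and of constant limits from $g^{\#}(0)=1$, and the Ahlfors--Shimizu estimate $T_{0}(r,g)\leq r^{2}/2$ giving order at most two are all correct and standard.

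However, the core of the argument --- the weighted maximum--rescaling --- is not carried out, and you acknowledge this yourself, so as it stands the proposal is a plan rather than a proof. Two concrete problems sit exactly at that point. First, the quantity you propose to maximize, $\rho^{\alpha}|f_n'(z)|/\bigl(1+\rho^{2\alpha}|f_n(z)|^{2}\bigr)$ times a boundary weight, is circular: $\rho_n$ does not exist until after the maximization, whereas in Pang's argument one maximizes a scale-free functional in which the distance to the boundary of the shrinking disc itself plays the role of the scale, and only then defines $\rho_n$ from the maximal value so that $g_n^{\#}(0)=1$; you would need to write down that functional explicitly and verify that its maximum tends to infinity (via non-normality and Marty), that the maximizing points $z_n$ converge to $z_0$, and that $\rho_n\to 0$. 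Second, the claim that $-l<\alpha<k$ together with the multiplicity bounds ``guarantees that the weighted derivative does not degenerate near a zero or a pole'' is precisely the statement that needs proof: one must use the local behaviour $f_n(z)\approx c(z-a)^{m}$ with $m\geq l$ at zeros and the analogous expansion of order at least $k$ at poles, combined with the factor $\rho_n^{\alpha}$, to show that the rescaled limit is neither identically $0$ nor identically $\infty$ and is a genuine non-constant meromorphic function on $\mathbb{C}$, and to upgrade the local bound $g_n^{\#}\leq 1+o(1)$ near the maximizer to $g^{\#}(\zeta)\leq g^{\#}(0)=1$ on all of $\mathbb{C}$ via an exhaustion $|\zeta|\leq R_n$, $R_n\to\infty$. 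Unless you intend to reproduce Pang's computation in full, the honest route is the one the paper takes: cite the Pang--Zalcman lemma rather than sketch it.
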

\begin{lemma}\cite{CH}\label{lemmaCH}
Let $f$ be an entire function. If the spherical derivative $f^{\#}(z)$ is bounded for all $z\in\C$, then $f$ has order at most 1.
\end{lemma}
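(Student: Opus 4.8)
The plan is to translate the pointwise hypothesis $f^{\#}(z)=\dfrac{|f'(z)|}{1+|f(z)|^{2}}\le M$ into a growth estimate on the maximum modulus $M(r,f)=\max_{|z|=r}|f(z)|$, and then read off the order from $\rho(f)=\limsup_{r\to\infty}\dfrac{\log\log M(r,f)}{\log r}$. The target is the sharp bound $\log M(r,f)=O(r)$, which is exactly what forces $\rho(f)\le 1$; the example $f(z)=e^{z}$, for which $f^{\#}\le\tfrac12$ while $\log M(r,f)=r$, shows that no better exponent can be expected.

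As a first, soft step I would bound the Ahlfors--Shimizu characteristic. Since $(f^{\#})^{2}\le M^{2}$ pointwise, the spherical area of the image of $\{|z|\le t\}$ satisfies $\tfrac1\pi\int_{|z|\le t}(f^{\#})^{2}\,dA\le M^{2}t^{2}$, so $T_{0}(r,f)=\int_{0}^{r}\frac{dt}{t}\cdot\frac1\pi\int_{|z|\le t}(f^{\#})^{2}\,dA\le \frac{M^{2}}{2}r^{2}$, whence $T(r,f)=O(r^{2})$ and $\rho(f)\le 2$. This argument, however, is indifferent to whether $f$ is entire or merely meromorphic, and it cannot be improved in the meromorphic category: an elliptic function is doubly periodic, hence has bounded spherical derivative, yet has order exactly $2$. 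Thus the whole content of the lemma lies in upgrading the exponent from $2$ to $1$, and the upgrade must use that $f$ is entire, i.e. that $f$ omits $\infty$ and $N(r,f)\equiv 0$.

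To exploit the omission of $\infty$ I would work on the open set $U=\{z:|f(z)|>1\}$. There $f$ is zero-free, so $g=1/f$ is holomorphic on $U$, and the conformal invariance of the spherical metric under $w\mapsto 1/w$ gives $g^{\#}=f^{\#}\le M$; since $|g|<1$ on $U$ this yields the clean Lipschitz-type bound $|g'(z)|=g^{\#}(z)\bigl(1+|g(z)|^{2}\bigr)\le 2M$ throughout $U$. The aim is to use this uniform control of $g=1/f$ on the ``large'' set $U$ — together with the fact that $g$ is pole-free there and has boundary values of modulus $1$ on $\partial U$ — to prove $m(r,f)=\frac1{2\pi}\int_{0}^{2\pi}\log^{+}|f(re^{i\theta})|\,d\theta=O(r)$. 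Because $f$ is entire, $T(r,f)=m(r,f)+N(r,f)=m(r,f)$, so this bound gives $T(r,f)=O(r)$ and hence $\rho(f)\le 1$.

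The main obstacle is precisely this last estimate, and the naive bounds fail in instructive ways. The two-dimensional area estimate is stuck at the exponent $2$; and a one-dimensional Lipschitz estimate along each circle — using $\bigl|\tfrac{d}{d\theta}|1/f(re^{i\theta})|\bigr|\le r|g'|\le 2Mr$ — controls $\log^{+}|f|$ only in a neighbourhood of the arcs where $|f|=1$, and says nothing in the interior of a long arc, where in fact the bulk of $\int\log^{+}|f|$ sits and where the trivial estimate merely returns $\log M(r,f)$. The correct route is a genuinely two-dimensional length--area argument: one uses that the spherical length of the image of each circle $\{|z|=r\}$ is at most $\int_{0}^{2\pi}f^{\#}(re^{i\theta})\,r\,d\theta\le 2\pi Mr$, which is linear in $r$, and converts this — through the conformal geometry of $g=1/f$ on the components of $U$ and the co-area formula — into the linear bound $m(r,f)=O(r)$. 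Making this conversion rigorous, in particular handling the parts of the image that linger near $\infty$ (where $f^{\#}$ is small even though $\log^{+}|f|$ is large), is the delicate heart of the proof and the step I expect to occupy the bulk of the work; it is also where the hypothesis that $f$ is entire is indispensable, the meromorphic analogue being false as the elliptic functions show.
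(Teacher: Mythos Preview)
The paper does not supply a proof of this lemma; it is cited from Clunie and Hayman and invoked as a black box in the proofs of the main theorems. There is thus no in-paper argument to compare your proposal against.

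Assessed on its own terms, your outline is accurate through the order-$2$ bound via the Ahlfors--Shimizu characteristic, and your diagnosis that the improvement from $2$ to $1$ must exploit the omitted value $\infty$ is correct; passing to $g=1/f$ on $\{|f|>1\}$ and looking at the spherical length $L(r)=\int_{0}^{2\pi}f^{\#}(re^{i\theta})\,r\,d\theta\le 2\pi Mr$ are the right moves. The proposal, however, stops exactly at the substantive step: you yourself flag the conversion of the linear bound $L(r)=O(r)$ into $m(r,f)=O(r)$ as ``the delicate heart of the proof'' and leave it open. That conversion \emph{is} the content of the Clunie--Hayman theorem; everything preceding it is standard. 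Until the length--area step is actually carried out --- linking $L(r)$ to $A'(r)$ via Cauchy--Schwarz and then using the omitted value $\infty$ to force a linear (rather than quadratic) relation between $A(r)$ and $r$ --- what you have is a plan, not a proof, and the gap you have honestly identified is the entire difficulty.
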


Let $f$ be a non-constant meromorphic function in $\C$. A  differential polynomial $P$ of $f$ is defined by $\displaystyle {P(z):= \sum_{i=1}^{n}\al_{i}(z)\prod_{j=0}^{p}\left(f^{(j)}\left(z\right)\right)^{S_{ij}}},$ where $S_{ij}$'s are nonnegative integers and    $\al_i(z)\not\equiv 0$ are small functions of $f$, which means $T(r,\al_i)=o\big(T(r,f)\big)$. The lower degree of the differential polynomial $P$ is defined by $$d(P):= \min_{1\leq i \leq n}\sum_{j=0}^{p}S_{ij} .$$

The following result was proved by Dethloff et al. in ~\cite{dethloff}.
\begin{lemma}\label{lem2}
Let $a_1, \ldots, a_q$ be distinct non-zero complex numbers.  Let $f$ be a non-constant  meromorphic function and let $P$  be a non-constant  differential polynomial of $f$ with $d(P)\geq 2.$ Then
\begin{equation}\notag
T(r,f)\leq \left(\frac{q\theta(P)+1}{qd(P)-1}\right)\overline{N}\left(r,\frac{1}{f}\right)+\frac{1}{qd(P)-1}\sum_{j=1}^{q}\overline{N}\left(r,\frac{1}{P-a_j}\right)+ S\left(r,f\right),
\end{equation}
for all $r\in [1,+\ity)$ excluding a set of finite Lebesgue measure, where $\displaystyle\theta(P):= \max_{1\leq i \leq n}\sum_{j=0}^{p}jS_{ij}.$\\

 Moreover, in the case of an entire function, we have
 \begin{equation}\notag
T(r,f)\leq \bigg(\frac{q\theta(P)+1}{qd(P)}\bigg)\overline{N}\left(r,\frac{1}{f}\right)+\frac{1}{qd(P)}\sum_{j=1}^{q}\overline{N}\left(r,\frac{1}{P-a_j}\right)+ S(r,f),
\end{equation}
for all $r\in [1,+\ity)$ excluding a set of finite Lebesgue measure.
\end{lemma}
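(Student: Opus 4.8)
The plan is to reduce both assertions to a single master inequality and then split into cases. Write $d=d(P)=\min_i d_i$ with $d_i=\sum_{j=0}^{p}S_{ij}$, and $\theta=\theta(P)=\max_i \Gamma_i$ with $\Gamma_i=\sum_{j=0}^{p}jS_{ij}$; by hypothesis $d\ge 2$. First I would establish, outside a set of finite Lebesgue measure, the estimate $q\,d\,T(r,f)\le (q\theta+1)\,\overline{N}(r,1/f)+\sum_{j=1}^{q}\overline{N}(r,1/(P-a_j))+\overline{N}(r,f)+S(r,f)$. Granting this, both parts of the lemma follow at once: in the meromorphic case I bound the pole term by $\overline{N}(r,f)\le T(r,f)$ and absorb it on the left, turning the coefficient $qd$ into $qd-1$ and giving the first inequality after dividing by $qd-1$; in the entire case $f$ has no poles, so $\overline{N}(r,f)=S(r,f)$, the coefficient stays $qd$, and dividing by $qd$ gives the second. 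Thus the whole difference between the two statements is carried by the single term $\overline{N}(r,f)$.

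Next, the master inequality rests on two ingredients. The first is the Second Main Theorem applied to $P$. Since each $\alpha_i$ is a small function of $f$, the lemma on the logarithmic derivative yields $T(r,P)\le(\max_i d_i)\,T(r,f)+S(r,f)$, so that $S(r,P)=S(r,f)$ and every error term below is $S(r,f)$; applying the Second Main Theorem to $P$ with the targets $a_1,\dots,a_q$ and $\infty$ then controls a multiple of $T(r,P)$ by $\overline{N}(r,P)+\sum_{j=1}^{q}\overline{N}(r,1/(P-a_j))+S(r,f)$, and since a pole of $P$ can occur only at a pole of $f$ or of some $\alpha_i$ we get $\overline{N}(r,P)\le\overline{N}(r,f)+S(r,f)$. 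The second ingredient is a degree relation converting the $P$-counting functions into $f$-counting functions: analysing $P$ locally at a zero of $f$ of order $m$, each monomial $\prod_{j}(f^{(j)})^{S_{ij}}$ vanishes there to order $m d_i-\Gamma_i$, so $P$ vanishes to order at least $md-\theta$, which gives $d\,N(r,1/f)\le N(r,1/P)+\theta\,\overline{N}(r,1/f)+S(r,f)$. Combining this zero count with the proximity estimate $m(r,1/f)=\tfrac1d\,m(r,1/f^{d})+O(1)$ handled through the logarithmic derivative lemma and the First Main Theorem leads to the lower bound $T(r,P)\ge d\,T(r,f)-\theta\,\overline{N}(r,1/f)+S(r,f)$, which is precisely what injects the factor $d$ and the weight $\theta$ into the final estimate.

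The main obstacle is assembling these ingredients so as to reach the \emph{sharp} constant. A naive substitution of the lower bound $T(r,P)\ge d\,T(r,f)-\theta\,\overline{N}(r,1/f)$ into the $(q-1)T(r,P)$ produced by the Second Main Theorem yields only the factor $(q-1)d$ in front of $T(r,f)$, which is weaker than the required $qd-1$. Recovering the sharp constant forces one to use the Second Main Theorem in its refined form with the ramification term and to distribute the degree factor $d$ uniformly across all $q$ targets $P=a_j$, rather than applying it once to $T(r,P)$; the coefficient $1$ on the sum $\sum_{j}\overline{N}(r,1/(P-a_j))$ is exactly the signal that the degree factor must be fed only into the $T(r,f)$ and $\overline{N}(r,1/f)$ sides. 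The most delicate bookkeeping will be at the zeros of $f$: the clean bound $md-\theta$ on the order of vanishing of $P$ holds at zeros of order $m\ge p$, while the boundedly many low-order zeros must be shown to be absorbable into $\overline{N}(r,1/f)$ and $S(r,f)$, and a parallel care is needed at poles of $f$ so that the pole contribution is kept down to the single $\overline{N}(r,f)$ recorded in the master inequality.
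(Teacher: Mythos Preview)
The paper does not prove this lemma at all: it is quoted verbatim as a result of Dethloff, Tan and Thin \cite{dethloff} (with the case $q=1$ attributed to Hinchliffe \cite{hin}), and is used as a black box in the proof of Lemma~\ref{lem3}. There is therefore no ``paper's own proof'' to compare your proposal against.

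As for the proposal itself, your reduction of both conclusions to a single master inequality carrying the term $\overline{N}(r,f)$ is exactly right, and the two ingredients you isolate---the logarithmic-derivative estimate $m(r,P/f^{d})=S(r,f)$ together with the local zero count $\operatorname{ord}_{z_0}P\ge m\,d(P)-\theta(P)$ at a zero of $f$ of order $m$---are the ones actually used in \cite{dethloff}. You have also correctly identified the genuine difficulty: a naive use of the Second Main Theorem for $P$ at $a_1,\dots,a_q,\infty$ yields only $(q-1)d(P)$ in front of $T(r,f)$. The way \cite{dethloff} closes this gap is not via a ramification term but by adjoining $0$ to the list of targets, so that the Second Main Theorem gives
\[
q\,T(r,P)\le \overline{N}(r,P)+\overline{N}\Big(r,\frac{1}{P}\Big)+\sum_{j=1}^{q}\overline{N}\Big(r,\frac{1}{P-a_j}\Big)+S(r,f),
\]
and then the extra term $\overline{N}(r,1/P)$ is controlled against $\overline{N}(r,1/f)$ using the same local analysis of zeros of $P$ that you already set up; this is what produces the coefficient $q\theta(P)+1$ rather than $q\theta(P)$. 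Your sketch would go through once you make that adjustment.
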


This result was proved  by Hinchliffe in ~\cite{hin} for $q=1$.
\begin{lemma}\label{lem3}
Let $f$ be a transcendental meromorphic function. Let $n$ be a non-negative integer and $n_1, n_2, \ldots, n_k, t_1, t_2, \ldots, t_k  $ be positive integers such that
\begin{enumerate}
\item[$(a)$] {$n_j \geq t_j$ for all $1 \leq j \leq k,$}
\item[$(b)$] {$n+ \sum _{j=1}^{k}n_j\geq 3 + \sum_{j=1}^{k}t_j$.}
\end{enumerate}
Then $\f$ assumes every non-zero complex value $p\in\C$ infinitely often.
\end{lemma}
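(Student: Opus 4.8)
The plan is to apply Lemma \ref{lem2} with $q=1$ to the differential polynomial $P:=\f$ and the single value $a_1=p$, and to show that if $P-p$ had only finitely many zeros we would be forced into $T(r,f)=S(r,f)$, which is absurd since $f$ is transcendental.

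First I would identify the two relevant invariants of $P$. Expanding each factor $(f^{n_j})^{(t_j)}$ by the generalized Leibniz rule, every resulting term is a constant multiple of a product of exactly $n_j$ factors $f^{(a_1)}f^{(a_2)}\cdots f^{(a_{n_j})}$ with $a_1+\cdots+a_{n_j}=t_j$. Multiplying out $f^n$ together with all the $(f^{n_j})^{(t_j)}$ therefore yields a differential polynomial in which every monomial has degree $n+\sum_j n_j$ and weight $\sum_j t_j$, so $d(P)=n+\sum_j n_j$ and $\theta(P)=\sum_j t_j$ (only hypothesis $(b)$ is used here). By $(b)$ we get $d(P)-1\ge\theta(P)+2>\theta(P)+1\ge 1$; in particular $d(P)\ge 2$, so Lemma \ref{lem2} applies once $P$ is known to be non-constant. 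That check is short: $P\not\equiv 0$ since $f$ is transcendental (so $f^n\not\equiv 0$ and each $(f^{n_j})^{(t_j)}\not\equiv 0$), and $P$ cannot be a non-zero constant either — a pole of $f$ would produce a pole of $P$, forcing $f$ to be entire, and then writing $P=f^{\,n+\sum_j n_j}\prod_j\!\big((f^{n_j})^{(t_j)}/f^{n_j}\big)$ and estimating each factor by the lemma on the logarithmic derivative (its counting term being at most $t_j\,\overline{N}(r,1/f)$, as $f$ is entire) gives $\big(n+\sum_j n_j-\sum_j t_j\big)T(r,f)\le S(r,f)$ with a positive leading coefficient by $(b)$, which is impossible for a transcendental $f$.

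With $P$ non-constant, Lemma \ref{lem2} (taking $q=1$, $a_1=p\neq 0$) gives
\begin{equation}\notag
T(r,f)\le\frac{\theta(P)+1}{d(P)-1}\,\overline{N}\!\left(r,\frac1f\right)+\frac{1}{d(P)-1}\,\overline{N}\!\left(r,\frac{1}{P-p}\right)+S(r,f)
\end{equation}
for all $r$ outside a set of finite measure. Suppose $P$ attains the value $p$ only finitely often. Then $\overline{N}(r,1/(P-p))=O(\log r)=S(r,f)$ because $f$ is transcendental, and by the First Main Theorem $\overline{N}(r,1/f)\le N(r,1/f)\le T(r,f)+O(1)$, so the inequality collapses to
\begin{equation}\notag
T(r,f)\le\frac{\theta(P)+1}{d(P)-1}\,T(r,f)+S(r,f).
\end{equation}
Since $\dfrac{\theta(P)+1}{d(P)-1}\le\dfrac{\theta(P)+1}{\theta(P)+2}<1$, the first term on the right can be absorbed into the left, forcing $T(r,f)=S(r,f)$, which is impossible since $T(r,f)\to\infty$. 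Hence $P-p$ has infinitely many zeros, i.e.\ $\f$ assumes the value $p$ infinitely often.

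The argument is essentially routine Nevanlinna bookkeeping; the steps demanding care are the verification that $P$ has the claimed $d(P)$ and $\theta(P)$ (a direct Leibniz-rule count) and the brief argument that $P$ is non-constant, which is what makes Lemma \ref{lem2} applicable in the first place.
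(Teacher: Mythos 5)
Your proposal is correct and follows essentially the same route as the paper: apply Lemma \ref{lem2} with $q=1$ to $P=\f$, compute $d(P)=n+\sum_j n_j$ and $\theta(P)=\sum_j t_j$, and absorb the $\overline{N}(r,1/f)\le T(r,f)$ term to force $T(r,f)=S(r,f)$ when $P-p$ has only finitely many zeros. Your explicit check that $P$ is non-constant is a small addition the paper leaves implicit, and it is argued correctly.
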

\begin{proof}
On the contrary, assume that $\f$ takes the value $p$ only finitely many times. Then
\begin{equation}\label{eq}
N\left(r, \frac{1}{(\f-p}\right) = O\left(\log r\right)= S(r, f).
\end{equation}

Without loss of generality we may assume $p=1$. Let $P=\f$. Consider $\displaystyle(f^{n_i})^{(t_i)}=\sum c_{m_0,m_1,\ldots,m_{t_i}}f^{m_0}(f')^{m_1}\ldots (f^{(t_i)})^{m_{t_i}}$, where $c_{m_0,m_1,\ldots,m_{t_i}}$ are constants and $m_0, m_1,\ldots, m_{t_i}$ are non-negative integers such that  $\displaystyle\sum_{j=0}^{t_i}m_j = n_i, \sum_{j=1}^{t_i}jm_j=t_i.$ It is easy to calculate
$$d(P) = n + \sum_{j=1}^{k}n_j \ \text{and}\ \theta(P)=\sum_{j=1}^{k}t_j.$$
 Clearly $d(P)>2.$ So by Lemma \ref{lem2} we get
\begin{equation}\notag
T(r,f)\leq \bigg(\frac{\sum_{j=1}^k t_j + 1}{n + \sum_{j=1}^{k}n_j - 1}\bigg)\overline{N}\left(r,\frac{1}{f}\right)+\bigg(\frac{1}{n + \sum_{j=1}^{k}n_j -  1}\bigg)\overline{N}\left(r,\frac{1}{P-1}\right)+ S(r,f),
\end{equation}
and this gives
\begin{equation}\notag
\bigg(\frac{n + \sum_{j=1}^{k}n_j-\sum_{j=1}^k t_j-2}{n + \sum_{j=1}^{k}n_j-1}\bigg)T(r,f)\leq \bigg(\frac{1}{n + \sum_{j=1}^{k}n_j -  1}\bigg)\overline{N}\left(r,\frac{1}{P-1}\right)+ S(r,f),
\end{equation}
and this gives
\begin{equation}\notag
\bigg(\frac{1}{n + \sum_{j=1}^{k}n_j-1}\bigg)T(r,f)\leq \bigg(\frac{1}{n + \sum_{j=1}^{k}n_j -  1}\bigg)N\left(r,\frac{1}{P-1}\right)+ S(r,f).
\end{equation}
 By using \eqref{eq} we get $T(r,f)=S(r,f)$, which is a contradiction.
\end{proof}

\begin{lemma}\label{lem5}
Let $f$ be a transcendental entire function. Let $n$ be a non-negative integer and $n_1, n_2, \ldots, n_k, t_1, t_2, \ldots, t_k  $ be positive integers such that
\begin{enumerate}
\item[$(a)$] {$n_j \geq t_j$ for all $1 \leq j \leq k,$}
\item[$(b)$] {$n+ \sum _{j=1}^{k}n_j\geq 2 + \sum_{j=1}^{k}t_j$.}
\end{enumerate}
Then $\f$ assumes every non-zero complex value $p\in\C$ infinitely often.
\end{lemma}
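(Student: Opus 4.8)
The plan is to run the argument of Lemma~\ref{lem3} essentially verbatim, the single change being that, because $f$ is now entire, I may invoke the \emph{sharper} (``moreover'') form of Lemma~\ref{lem2}, in which the denominators are $qd(P)$ rather than $qd(P)-1$. That saving of one unit is precisely what lets hypothesis~(b) be weakened from $n+\sum_{j=1}^k n_j\ge 3+\sum_{j=1}^k t_j$ to $n+\sum_{j=1}^k n_j\ge 2+\sum_{j=1}^k t_j$.

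I would argue by contradiction. Suppose $P:=\f$ takes some prescribed non-zero value — which we may normalize to $1$ — only finitely often, so that $N\!\left(r,\tfrac1{P-1}\right)=O(\log r)=S(r,f)$. Since $f$ is transcendental entire, so is each $f^{n_i}$ and hence each $(f^{n_i})^{(t_i)}$; being a product of non-constant entire functions, $P$ is a non-constant entire function. Expanding $(f^{n_i})^{(t_i)}$ as a differential polynomial exactly as in Lemma~\ref{lem3} gives $d(P)=n+\sum_{j=1}^k n_j$ and $\theta(P)=\sum_{j=1}^k t_j$, and hypothesis~(b) forces $d(P)\ge 2+\sum_{j=1}^k t_j\ge 3$, so in particular $d(P)\ge 2$ and the entire-function case of Lemma~\ref{lem2} is applicable.

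Applying that inequality with $q=1$, $a_1=1$, and then estimating $\overline N(r,1/f)\le T(r,1/f)+O(1)=T(r,f)+O(1)$ by the first fundamental theorem, I obtain
\[
T(r,f)\le \frac{\sum_{j=1}^{k}t_j+1}{\,n+\sum_{j=1}^{k}n_j\,}\,T(r,f)+\frac{1}{\,n+\sum_{j=1}^{k}n_j\,}\,\overline{N}\!\left(r,\frac{1}{P-1}\right)+S(r,f).
\]
Transposing the first term and using $n+\sum_{j=1}^{k}n_j-\sum_{j=1}^{k}t_j-1\ge 1$, which is exactly~(b), this collapses to $T(r,f)\le \overline{N}\!\left(r,\tfrac1{P-1}\right)+S(r,f)\le N\!\left(r,\tfrac1{P-1}\right)+S(r,f)=S(r,f)$, a contradiction. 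The only part needing any vigilance — and the reason this is not purely mechanical — is confirming that the structural requirements of Lemma~\ref{lem2} (namely $P$ non-constant and $d(P)\ge 2$) survive in the degenerate parameter ranges such as $n=0$ or $k=1$; as observed, condition~(b) guarantees both, and since $f$ is transcendental by assumption no rational-versus-transcendental case split arises.
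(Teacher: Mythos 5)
Your proposal is correct and coincides with the paper's intended argument: the paper proves Lemma~\ref{lem5} only by remarking that it follows ``by arguments similar to the proof of Lemma~\ref{lem3}'', and your write-up is exactly that argument with the entire-function (``moreover'') case of Lemma~\ref{lem2}, whose denominator $qd(P)$ in place of $qd(P)-1$ accounts for the weakened hypothesis $n+\sum n_j\geq 2+\sum t_j$. One small caveat: your justification that $P$ is non-constant (``a product of non-constant entire functions is non-constant'') is not a valid general principle (consider $e^{z}e^{-z}$), though the needed non-constancy does hold here and the paper itself passes over this point silently as well.
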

We can prove this lemma by arguments similar to the proof of Lemma \ref{lem3}.

\begin{lemma}\label{lemma2}\cite{lahiri, lahiri 1},
Let $R=\frac{P}{Q} $ be a rational function and $Q$ be non-constant. Then $\left(R^{(k)}\right)_{\ity}\leq(R)_{\ity}-k,$ where $k$ is a positive integer,  $(R)_{\ity}=$ $\text{deg}(P)-\text{deg}(Q)$ and $\text{deg}(P)$ denotes the degree of P.
\end{lemma}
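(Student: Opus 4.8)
The plan is to prove the estimate by induction on $k$, reducing everything to the first‑order case. Before starting I would dispose of the degenerate situation in which $R$ is constant (equivalently $R^{(j)}\equiv 0$ for some $j\leq k$): there the inequality holds trivially under the usual convention $(0)_\ity=-\ity$. So from now on $R$ may be assumed non-constant.

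For the base case $k=1$, I would simply compute $R'=(P'Q-PQ')/Q^2$ and count degrees. Put $p=\deg P$, $q=\deg Q$, and let $a,b$ be the leading coefficients of $P,Q$. Inspecting the top-degree terms shows that the coefficient of $z^{p+q-1}$ in $P'Q-PQ'$ equals $(p-q)ab$. Hence $\deg(P'Q-PQ')=p+q-1$ when $p\neq q$, while $\deg(P'Q-PQ')\leq p+q-2$ when $p=q$. Since the quantity $\deg(\text{numerator})-\deg(\text{denominator})$ of a rational function is unchanged upon cancelling common factors, we have $(R')_\ity=\deg(P'Q-PQ')-2q$, and in either case this is $\leq (p-q)-1=(R)_\ity-1$.

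For the inductive step I would write $R'=A/Q^{2}$ with $A=P'Q-PQ'$; here $Q^{2}$ is non-constant because $Q$ is, so the induction hypothesis applies to $R'$ with exponent $k-1$ and yields $\big(R^{(k)}\big)_\ity=\big((R')^{(k-1)}\big)_\ity\leq (R')_\ity-(k-1)\leq (R)_\ity-k$, completing the induction.

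I do not expect a genuine obstacle here; the only points to watch are the possible extra drop in degree (or outright vanishing) of $P'Q-PQ'$ in the case $p=q$, which is harmless since only an upper bound is asserted, and the routine bookkeeping that each differentiation keeps the denominator non-constant so that the inductive hypothesis remains applicable.
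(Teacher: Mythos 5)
Your proof is correct: the reduction to the non-constant case, the quotient-rule computation showing $\deg(P'Q-PQ')\le p+q-1$ with the extra drop when $p=q$, the observation that $\deg(\text{numerator})-\deg(\text{denominator})$ is representation-independent, and the induction via $R'=(P'Q-PQ')/Q^{2}$ (whose denominator is again non-constant) together give exactly the asserted bound $\left(R^{(k)}\right)_{\infty}\leq (R)_{\infty}-k$. Note that the paper does not prove this lemma at all --- it is quoted from the cited works of Zeng and Lahiri --- so there is no in-paper argument to compare with; your induction is the standard elementary proof and is a sound, self-contained substitute for the citation.
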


\begin{lemma}\label{lemma3}\cite{lahiri}
Let $R = a_mz^m + \ldots + a_1z + a_0 +\frac{P}{B},$ where $a_0, a_1, \ldots, a_{m-1}, a_m(\neq0) $ are constants, $m$ is a positive integer and $P,\  B$ are polynomials with deg$(P)<$ deg$(B)$. If $k\leq m$,  then $\left(R^{(k)}\right)_{\ity} = (R)_{\ity}-k,$
\end{lemma}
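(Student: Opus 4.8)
The plan is to split $R$ into its polynomial part $\Pi(z):=a_mz^m+\cdots+a_1z+a_0$ and its proper part $\varphi:=P/B$, and to follow the order at infinity of each under differentiation. First I would dispose of the trivial case: if $P\equiv 0$ then $R$ is simply a polynomial of degree $m$ and the conclusion is immediate, so I may assume $P\not\equiv 0$; then $\deg P\geq 0$ together with $\deg P<\deg B$ forces $\deg B\geq 1$, so $B$ is non-constant and Lemma~\ref{lemma2} is applicable to $\varphi$. Writing $R$ over the common denominator $B$ as $R=\bigl(\Pi(z)B(z)+P(z)\bigr)/B(z)$, the hypothesis $\deg P<\deg B$ means the numerator has degree $m+\deg B$ with leading coefficient $a_m$ times the (nonzero) leading coefficient of $B$; hence, by the definition of $(\,\cdot\,)_{\ity}$ used in Lemma~\ref{lemma2}, $(R)_{\ity}=(m+\deg B)-\deg B=m$.

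Next I would differentiate $k$ times and use $R^{(k)}=\Pi^{(k)}+\varphi^{(k)}$. Since $k\leq m$, the polynomial $\Pi^{(k)}$ has degree exactly $m-k\geq 0$, with leading coefficient $\tfrac{m!}{(m-k)!}\,a_m\neq 0$. For the proper part, Lemma~\ref{lemma2} gives $(\varphi^{(k)})_{\ity}\leq(\varphi)_{\ity}-k=(\deg P-\deg B)-k\leq -k-1<0$. Writing $\varphi^{(k)}=U/V$ as a quotient of polynomials, this says $\deg U-\deg V=(\varphi^{(k)})_{\ity}<0$, i.e. $\deg U<\deg V$.

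Finally I would recombine over the denominator $V$: $R^{(k)}=\bigl(\Pi^{(k)}V+U\bigr)/V$. Because $\deg\bigl(\Pi^{(k)}V\bigr)=(m-k)+\deg V\geq\deg V>\deg U$ — the inequality $m-k\geq 0$ is exactly where the hypothesis $k\leq m$ re-enters — no cancellation can occur at the top, so $\deg\bigl(\Pi^{(k)}V+U\bigr)=(m-k)+\deg V$, whence $(R^{(k)})_{\ity}=(m-k)+\deg V-\deg V=m-k=(R)_{\ity}-k$, as claimed. I do not anticipate a serious obstacle here: the only steps demanding a little care are checking that no cancellation kills the leading term of the numerator, both before differentiation (secured by $\deg P<\deg B$) and after (secured by $(\varphi^{(k)})_{\ity}<0\leq m-k$), together with the harmless reduction to the case $P\not\equiv 0$ needed to invoke Lemma~\ref{lemma2}.
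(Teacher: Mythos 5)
Your proof is correct. Note that the paper itself gives no argument for this lemma at all --- it is quoted from \cite{lahiri} --- so there is nothing internal to compare with; your route (split off the polynomial part $\Pi$, compute $(R)_{\infty}=m$ from the representation $(\Pi B+P)/B$ using $\deg P<\deg B$, bound the proper part by Lemma \ref{lemma2} to get $(\varphi^{(k)})_{\infty}<0$, and recombine observing that the leading term of $\Pi^{(k)}V$ cannot be cancelled because $m-k\geq 0$) is the standard one and is complete. The only step that deserves the brief justification you essentially gave is the reduction to $P\not\equiv 0$, which guarantees that $\varphi=P/B$ is a non-polynomial rational function with non-constant denominator, so that Lemma \ref{lemma2} is applicable; and even in the degenerate event $\varphi^{(k)}\equiv 0$ (which in fact cannot occur here) the conclusion would follow directly from $\deg\Pi^{(k)}=m-k$.
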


\begin{lemma}\label{lem4}
Let $f$ be a non-constant rational  function, $p\in\C\setminus\{0\}$,  $n$ be a non-negative integer and $n_1, n_2, \ldots, n_k, t_1, t_2, \ldots, t_k  $ be positive integers such that
\begin{enumerate}
\item[$(a)$] {$n_j \geq t_j$ for all $1 \leq j \leq k,$}
\item[$(b)$] {$n+ \sum _{j=1}^{k}n_j\geq 3 + \sum_{j=1}^{k}t_j$.}
\end{enumerate}
Then $\f$ has at least two distinct $p$-points.
\end{lemma}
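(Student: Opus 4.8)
The plan is to argue by contradiction. Suppose that $F:=\f$ has at most one distinct $p$-point in $\C$; I will derive a contradiction. Write $\Sigma:=n+\sum_{j=1}^{k}n_j$ and $\Theta:=\sum_{j=1}^{k}t_j$, so that conditions $(a)$ and $(b)$ give $\Sigma\ge\Theta$ and $\Sigma\ge 3+\Theta$. The whole argument runs on a multiplicity comparison: since $n_j\ge t_j$ and $\mu\ge1$ force $\mu n_j\ge t_j$, at a zero of $f$ of order $\mu$ each $(f^{n_j})^{(t_j)}$ vanishes to the exact order $\mu n_j-t_j\ge0$, hence $F$ vanishes there to order $\mu\Sigma-\Theta\ge 3\mu$; dually, at a pole of $f$ of order $\nu$ the function $F$ has a pole of order exactly $\nu\Sigma+\Theta$, and $F$ has no other poles. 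Since $f$ is a non-constant rational function, I consider two cases. If $f$ is a polynomial of degree $d\ge1$, then $F$ is a polynomial of degree $M=d\Sigma-\Theta\ge 3d\ge3$, so ``at most one distinct zero'' forces $F-p=c(z-z_0)^{M}$, and $F'=cM(z-z_0)^{M-1}$ vanishes only at $z_0$; but a zero $a$ of $f$ (which exists) is a zero of $F$ of order $\ge3$, hence of $F'$, and $a\ne z_0$ because $F(a)=0\ne p=F(z_0)$, a contradiction.

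So assume $f$ has a finite pole. Let $f$ have $s$ distinct finite zeros of total multiplicity $P$ and $l\ge1$ distinct finite poles of total multiplicity $Q\ge l$. By the multiplicity count the denominator of $F$ (in lowest terms) has degree $\Sigma Q+l\Theta$ and its numerator has degree $\Sigma P-s\Theta+E$, where $E\ge0$ is the total multiplicity of the zeros of $F$ not lying over zeros of $f$; thus $(F)_\infty=\Sigma(P-Q)-(s+l)\Theta+E$. On the other hand $(f)_\infty=P-Q$, and Lemma~\ref{lemma2} applied to each $f^{n_j}$ (whose denominator is non-constant, as $f$ has a pole) gives $\bigl((f^{n_j})^{(t_j)}\bigr)_\infty\le n_j(P-Q)-t_j$, hence $(F)_\infty\le\Sigma(P-Q)-\Theta$; comparing the two expressions yields $E\le(s+l-1)\Theta$. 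Now exploit the hypothesis: as the poles of $F-p$ are those of $F$ and $F-p$ has at most one zero, $F-p=c\,(z-z_0)^{m}/\prod_i(z-b_i)^{M_i}$ with $m\ge0$, so $F'/(F-p)=\tfrac{m}{z-z_0}-\sum_i\tfrac{M_i}{z-b_i}$, whose numerator after clearing denominators is a polynomial of degree at most $l$. At a zero $a$ of $f$ of order $\mu$ this numerator must vanish to order $\ge 3\mu-1$, because $F'$ does, $F-p$ is finite and nonzero at $a$, and $a$ differs from $z_0$ and from every $b_i$; summing over the $s$ zeros of $f$ gives $l\ge 3P-s$, so, using $P\ge s$ and $l\ge1$, $l\ge\max(1,2s)$.

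Finally, a standard consequence of the Riemann--Hurwitz formula is that the numbers of distinct solutions in $\C\cup\{\infty\}$ of $F=0$, $F=p$ and $F=\infty$ add up to at least $\deg F+2$; these numbers are at most $s+E+1$, $2$ and $l+1$, while $\deg F\ge\Sigma Q+l\Theta$, so
\[
\Sigma Q+l\Theta\ \le\ s+E+l+2 .
\]
Substituting $E\le(s+l-1)\Theta$, $Q\ge l$ and $\Sigma\ge3+\Theta$ reduces this to $l(2+\Theta)\le s+(s-1)\Theta+2$. If $s\ge1$, then together with $l\ge2s$ this forces $3s+(s+1)\Theta\le 2$; if $s=0$, it reads $l(2+\Theta)\le 2-\Theta$, whose left side is $\ge3$ and right side $\le1$. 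Both are impossible, which proves the lemma.

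The hardest part I anticipate is the bookkeeping of the pole case, and within it the behaviour at $\infty$: Lemma~\ref{lemma2} (sharpened by Lemma~\ref{lemma3} in the configurations where $f$, and hence $F$, has a pole at $\infty$) is precisely what controls $(F)_\infty$, and passing to the three-value Riemann--Hurwitz count is what lets one absorb a possible zero, $p$-point or pole of $F$ at $\infty$ without case-splitting on the sign of $(F)_\infty$. One must also check that the slack ``$\ge 3$'' in $(b)$ propagates through every inequality (it does, with room to spare), and note that $(a)$ is used in a small but essential way: without $n_j\ge t_j$ one cannot even conclude that $(f^{n_j})^{(t_j)}$ vanishes at a zero of $f$, and the entire multiplicity argument collapses.
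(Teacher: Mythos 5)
Your proof is correct, but it takes a genuinely different route from the paper in the main case. The polynomial case is essentially the paper's: there too one writes $F-p=c(z-z_0)^M$ and gets a contradiction from a zero of $f$ being a zero of $F'$ distinct from $z_0$. For the case where $f$ has a finite pole, however, the paper works entirely with degree bookkeeping at infinity: it splits Case 1 into three subcases according to whether $l$ is less than, equal to, or greater than $N+\sum_{i}N_i+tt'$, tracks the auxiliary polynomials $g_1,g_2$ appearing in $(F)'$ via Lemma~\ref{lemma2} and Lemma~\ref{lemma3}, and treats the ``no $p$-point'' situation as a separate Case 2. You instead combine three ingredients: the same Lemma~\ref{lemma2} estimate, used once to bound the total multiplicity $E$ of zeros of $F$ not over zeros of $f$ by $(s+l-1)\Theta$; a partial-fraction (logarithmic derivative) argument giving $l\ge 3P-s$, which plays the role of the paper's observation that $(z-z_0)^{l-1}$ or $\prod(z-\al_i)^{m_in'-t'-1}$ divides $g_1$ or $g_2$; and the Riemann--Hurwitz consequence that the preimages of the three values $0,p,\infty$ number at least $\deg F+2$. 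I checked the local multiplicity counts (exact orders $\mu\Sigma-\Theta$ at zeros and $\nu\Sigma+\Theta$ at poles, both relying on hypothesis $(a)$), the degree identities for numerator and denominator, the bound $\deg N\le l$ for the cleared numerator of $F'/(F-p)$, and the final arithmetic $l(2+\Theta)\le s+(s-1)\Theta+2$ leading to contradictions for $s\ge1$ and $s=0$; all are sound, and your formulation with $m\ge0$ absorbs the paper's Case 2 without extra work. What each approach buys: the paper's argument is longer but completely elementary, using only the two rational-function degree lemmas it states; yours is shorter and avoids the sign case-splitting on $(F)_\infty$, at the cost of importing the Riemann--Hurwitz count for rational maps, which is standard but not among the paper's stated tools and should be cited or proved if used in this context.
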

\begin{proof}
 On the contrary, assume that $\f$ has at most one $p$-point. Now there are two cases to consider.\\

\underline{Case 1:} Suppose $\f$ has exactly one $p$-point.
First we assume that $f$ is a non-constant  polynomial. Since $\f$ has exactly one $p$-point, we can set
$$\f-p=A(z-z_0)^l,$$
  where $A$ is a non-zero constant and $l$ is a positive integer satisfying $l\geq n+\sum n_j -\sum t_j\geq 3.$ Then $$\left(\f\right)' = Al(z-z_0)^{l-1}.$$ Since a zero of $f$ is a zero of $\f$ with multiplicity greater than $1$, so it is also a zero of $\left(\f\right)'.$ Since $\left(\f\right)'$ has exactly one zero namely $ z_0$ and $f$  is a non-constant polynomial, it follows that $z_0$ is a zero of $f$ and so is a zero of $\f$, which is a contradiction. Therefore $f $ is a rational function which is not a polynomial. Let
  \begin{equation}\label{eq2.1}
f(z)=A\frac{(z-\al_1)^{m_1}(z-\al_2)^{m_2}\ldots (z-\al_s)^{m_s}}{(z-\be_1)^{n_1'}(z-\be_2)^{n_2'}\ldots (z-\be_t)^{n_t'}},
\end{equation}
where $A$ is non-zero constant and $m_i$'s  and
$n_j$'s are integers. We put
\begin{equation}\notag
M=n\sum_{i=1}^{s}m_i,
  N=n\sum_{j=1}^{t}n_j',
 \end{equation}
 and
 \begin{equation}\notag
  M_i=n_i\sum_{j=1}^{s}m_j,
   N_i=n_i\sum_{j=1}^{t}n_j', i=1, 2,\ldots, k.
\end{equation}

From \eqref{eq2.1} we get
\begin{equation}\label{1}
f^{n_i}(z)=A^{n_i}\frac{(z-\al_1)^{n_im_1}(z-\al_2)^{n_im_2}\ldots (z-\al_s)^{n_im_s}}{(z-\be_1)^{n_in_1'}(z-\be_2)^{n_in_2'}\ldots (z-\be_t)^{n_in_t'}},
 \end{equation}
 and so
 \begin{equation}\label{2}
 (f^{n_i})^{(t_i)}(z)=\frac{(z-\al_1)^{n_im_1-t_i}(z-\al_2)^{n_im_2-t_i}\ldots (z-\al_s)^{n_im_s-t_i}g_i(z)}{(z-\be_1)^{n_in_1'+t_i}(z-\be_2)^{n_in_2'+t_i}\ldots (z-\be_t)^{n_in_t'+t_i}},
  \end{equation}
  where $g_i(z)$ is a polynomial. From \eqref{1} and \eqref{2} we get
  \begin{center}
  $(f^{n_i})_{\ity} = M_i - N_i \ \text{and}\ \left((f^{n_i})^{(t_i)}\right)_{\ity}  = M_i-N_i-t_i(s+t)+\text{deg}g_i(z)$.
  \end{center}
  Since by Lemma \ref{lemma2} $\left((f^{n_i})^{(t_i)}\right)_{\ity}\leq (f^{n_i})_{\ity} - t_i$, we get
  \begin{equation}
  \text{deg} (g_i)\leq t_i(s+t-1).
  \end{equation}

  From \eqref{eq2.1} and \eqref{2} we get
\begin{align}
f^n(f^{n_1})^{(t_1)}\ldots (f^{n_k})^{(t_k)}&\label{eq2.2}\\
&=A^{n}\frac{(z-\al_1)^{m_1n'-t'}(z-\al_2)^{m_2n'-t'}\ldots (z-\al_s)^{m_sn'-t'}g(z)}{(z- \be_1)^{n_1'n'+t'}(z- \be_2)^{n_2'n'+t'}\ldots (z- \be_t)^{n_t'n'+t'}}\notag\\
&=\frac{p_1}{q_1},  \notag
\end{align}
where  $\displaystyle n'= n+\sum_{j=1}^{k}n_j$, $\displaystyle t'=\sum_{j=1}^{k}t_j$ and  $p_1, q_1 $, $g(z)$ are polynomials with
\begin{equation}\label{3}
\text{deg}\left(g(z)\right)\leq (s+t-1)\sum_{j=1}^{k}t_j= t'(s+t-1).
 \end{equation}

Since $\f$ has exactly one $p-$point and it is at $z_0,$ we get from \eqref{eq2.2} that
\begin{align}
f^n(f^{n_1})^{(t_1)}\ldots (f^{n_k})^{(t_k)}&\notag\\
&=p+\frac{B(z-z_0)^l}{(z-\be_1)^{n_1'n'+t'}(z-\be_2)^{n_2'n'+t'}\ldots (z-\be_t)^{n_t'n'+t'}}\notag\\
&=\frac{p_1}{q_1}, \label{eq2.3}
\end{align}
where $B$ is a non-zero constant and $l$ is  a positive integer. From \eqref{eq2.2} we also obtain that
\begin{align}
\left(\f\right)'&\notag\\
&=\frac{(z-\al_1)^{m_1n'-t'-1}(z-\al_2)^{m_2n'-t'-1}\ldots (z-\al_s)^{m_sn'-t'-1}g_1(z)}{(z-\be_1)^{n_1'n'+t'+1}(z-\be_2)^{n_2'n'+t'+1}\ldots (z-\be_t)^{n_t'n'+t'+1}},\label{eq2.4}
\end{align}
where $g_1(z)$ is a polynomial. From \eqref{eq2.3} we obtain that
\begin{align}
\left(f^n(f^{n_1}\right)^{(t_1)}\ldots (f^{n_k})^{(t_k)})'&\notag\\
&=\frac{(z-z_0)^{l-1}g_2(z)}{(z-\be_1)^{n_1'n'+t'+1}(z-\be_2)^{n_2'n'+t'}\ldots (z-\be_t)^{n_t'n'+t'+1}}, \label{eq2.5}
\end{align}
where $g_2(z)$ is a polynomial. From \eqref{eq2.2} and \eqref{eq2.4} we obtain
\begin{align*}
\left(\f\right)_{\ity}&= M + \sum_{i=1}^{k}M_i - st'+\text{deg}(g(z))-N-\sum_{i=1}^{k}N_i-tt',\\
\left(\left(\f\right)'\right)_{\ity}&= M+ \sum_{i=1}^{k}M_i - st'+\text{deg}\left(g_1(z)\right)-N-\sum_{i=1}^{k}N_i-tt'-s-t.
\end{align*}

By Lemma \ref{lemma2} we get
\begin{equation}
\left(\left(\f\right)'\right)_{\ity}\leq \left(\f\right)_{\ity}-1.
\end{equation}

Hence we obtain
\begin{align}
\text{deg}\left( g_1\left(z\right)\right)&\leq s+t+\text{deg}(g(z))-1\notag\\
&\leq s+t+(s+t-1)t' -1\notag\\
&= (s+t-1)(t'+1).\label{eq2.6}
\end{align}

%Again from \eqref{eq2.3} and \eqref{eq2.5} we get
%\begin{align}
%\left(\f\right)_{\ity}&=l+\left(N+\sum_{i=1}^{k} N_i+tt'\right) \ \text{and}\notag\\
%\left(\left(\f\right)'\right)_{\ity}&=l-1+ \ \text{deg}(g_2)-\left(\sum_{i=1}^{t}n_i'\right)n'+tt' +t),\notag
%\end{align}
%and from this with  Lemma \ref{lemma2} we obtain deg$(g_2)\leq t.$

Now we consider the following subcases.\\

  \underline{Subcase 1.} When  $\displaystyle l < N + \sum_{i=1}^{k} N_i +tt' $.\\

   From \eqref{eq2.3} we have $\text{deg}(p_1)=\text{deg}(q_1)$, also  from \eqref{eq2.2} and \eqref{3} we get that
  \begin{align}
  \text{deg}(q_1)&=N + \sum_{i=1}^{k} N_i + tt' = \text{deg}(p_1)\notag\\
  &\leq M + \sum_{i=1}^{k} M_i + (t-1)t'.\notag
  \end{align}

Hence $\left(M+\sum_{i=1}^{k} M_i\right)-(N+\sum_{i=1}^{k} N_i)\geq t'. $ This implies $\sum_{j=1}^{s} m_j - \sum_{j=1}^{t}n_j'\geq 1.$ Therefore $(f)_{\ity}\geq1$ and $(f^{n_i})_{\ity}\geq n_i$. Therefore we can write $f^{n_i}$ as follows
\begin{center}
$f^{n_i} = a_mz^m +\ldots +a_1z + a_0 +\frac{p}{B},$
\end{center}
where  $m (\geq n_i)$ is an integer, $a_m, \ldots, a_1, a_0$ are constants such that $a_m\neq0$ and $p, \  B $ are polynomials with $\text{deg}(p)< \text{deg}(B).$ Now by using Lemma \ref{lemma3} we get
\begin{equation}\label{eq2.7}
\left(\left(f^{n_i}\right)^{(t_i)}\right)_{\ity}= (f^{n_i})_{\ity} -t_i \geq n_i -t_i.
\end{equation}

Since $(f)_{\ity}\geq1$, from \eqref{eq2.7} we see that $\left(\f\right)_{\ity}\geq n'-t'\geq 3$,  which contradicts the fact that deg$(p_1)= \text{deg} (q_1)$.\\

\underline{Subcase 2.}  When $\displaystyle l = N + \sum_{i=1}^{k} N_i +tt' $.\\

Then from \eqref{eq2.3} we get $\left(\f\right)_{\ity}\leq 0$. Now we show that $$\sum_{i=1}^{s}m_i\leq\sum_{i=1}^{t}n_i'. $$ Otherwise $(f^n)_{\ity}= n\sum_{i=1}^{s} m_i -n \sum_{i=1}^{t} n_i'\geq n$ and $\left(\left(f^{n_i}\right)^{(t_i)}\right)_{\ity}=(f^{n_i})_{\ity}-t_i\geq n_i-t_i$ and so $\left(\f\right)_{\ity}\geq n+\sum_{i=1}^{k} n_i- \sum_{i=1}^{k}t_i\geq 3$, which is a contradiction.\\

Since $\al_i\neq z_0$ for $i=1, 2,\ldots, s$ from \eqref{eq2.4} and \eqref{eq2.5} we see that $(z-z_0)^{l-1} $ is a factor of $g_1$. Therefore by \eqref{eq2.6} we get $l-1\leq \text{deg}(g_1)\leq (s+t-1)(t'+1).$ Now we have
\begin{align*}
N+\sum_{i=1}^{k}N_i&= l-t\sum_{i=1}^{k}t_i\\
&\leq (s+t-1)\bigg(\sum_{i=1}^{k}t_i+1\bigg)+1-t\sum_{i=1}^{k}t_i\\
&=s\bigg(\sum_{i=1}^{k}t_i+1\bigg)+t-\sum_{i=1}^{k}t_i\\
&\leq \sum_{i=1}^{s}m_i\bigg(\sum_{i=1}^{k}n_i+1\bigg) + \sum_{i=1}^{t}n_i'-\sum_{i=1}^{k}t_i\\
&\leq \sum_{i=1}^{k}M_i+2\sum_{i=1}^{t}n_i'-\sum_{i=1}^{k}t_i\\
&\leq \sum_{i=1}^{k}N_i+2\sum_{i=1}^{t}n_i'-\sum_{i=1}^{k}t_i,
\end{align*}
which is a contradiction when $n>2$. For the case $0\leq n\leq2$ we use the condition $n+\sum_{i=1}^{k}n_i\geq 3+\sum_{i=1}^{k}t_i,$ to get
\begin{align*}
N+\sum_{i=1}^{k}N_i&\leq \sum_{i=1}^{s}m_i\bigg(\sum_{i=1}^{k}t_i+1\bigg)+t-\sum_{i=1}^{k}t_i\\
&\leq \sum_{i=1}^{k}n_i \sum_{i=1}^{s}m_i + \sum_{i=1}^{t}n_i'-\sum_{i=1}^{k}t_i\\
&\leq\sum_{i=1}^{k}M_i + \sum_{i=1}^{t}n_i'-\sum_{i=1}^{k}t_i\\
&\leq\sum_{i=1}^{k}N_i+\frac{N}{n}-\sum_{i=1}^{k}t_i\\
&\leq N +\sum_{i=1}^{k}N_i-\sum_{i=1}^{k}t_i,
\end{align*}
which is again a contradiction.\\

\underline{Subcase 3.} When $\displaystyle l > N + \sum_{i=1}^{k} N_i +tt' $.\\

Then from \eqref{eq2.3} we have $(\f)_{\ity}> 0$. Now we claim $$\sum_{i=1}^{s}m_i>\sum_{i=1}^{t}n_i'.$$ If $\sum_{i=1}^{s}m_i\leq\sum_{i=1}^{t}n_i'$, then $(f)_{\ity}\leq0, (f^{n_i})_{\ity}\leq0 \ \text{and} \ (f^{n})_{\ity}\leq0 $. Hence by Lemma \ref{lemma2} we obtain that
\begin{align*}
\left(\f\right)_{\ity}&=(f^n)_{\ity}+(f^{n_1})_{\ity}+\ldots +(f^{n_k})_{\ity}\\
&\leq 0+\sum_{i=1}^{\ity}(f^{n_i})_{\ity}-t_i<0,
\end{align*}
which is a contradiction.\\

Again from \eqref{eq2.3} and \eqref{eq2.5} we get
\begin{align}
\left(\f\right)_{\ity}&=l-\left(N+\sum_{i=1}^{k} N_i+tt'\right) \ \text{and}\notag\\
\left(\left(\f\right)'\right)_{\ity}&=l-1+ \ \text{deg}(g_2)-\left(\sum_{i=1}^{t}n_i'\right)n'+tt' +t),\notag
\end{align}
and from this with  Lemma \ref{lemma2} we obtain deg$(g_2)\leq t.$

Since for each $i=1, 2, \ldots, s, \al_i\neq z_0.$ From \eqref{eq2.4} and \eqref{eq2.5} we observe  that,\\  $\displaystyle{(z-\al_1)^{m_1n'-t'-1}(z-\al_2)^{m_2n'-t'-1}\ldots (z-\al_s)^{m_sn'-t'-1}}$ is a factor of $g_2.$ Therefore
\begin{equation}\label{4}
M+\sum_{i=1}^{k}M_i-st'-s \leq \ \text{deg}(g_2)\leq t,
\end{equation}

and from \eqref{4} we get that
\begin{align}
M+\sum_{i=1}^{k}M_i&\leq s+t +st'\notag\\
&=t+(t'+1)s\notag\\
&\leq \sum_{i=1}^{t}n_i'+\left(\sum_{i=1}^{k}n_i+1\right)\sum_{i=1}^{s}m_i\notag\\
&< \sum_{i=1}^{s}m_i+\left(\sum_{i=1}^{k}n_i+1\right)\sum_{i=1}^{s}m_i\notag\\
&=\frac{2}{n}M +\sum_{i=1}^{k} M_i,\notag
\end{align}
which is a contradiction when $n>2.$ For the case $0\leq n\leq2$ we use the condition $ n+ \sum_{i=1}^{k}n_i\geq 3+\sum_{i=1}^{k}t_i,$ to get
\begin{align*}
M+\sum_{i=1}^{k}M_i&\leq \sum_{i=1}^{t}n_i'+\left(\sum_{i=1}^{k}t_i+1\right)\sum_{i=1}^{s}m_i\\
&\leq \frac{N}{n_1}+ \sum_{i=1}^{k}n_i\sum_{i=1}^{s}m_i\\
&< \frac{M}{n_1}+ \sum_{i=1}^{k}M_i,
\end{align*}
which is a contradiction.\\

\underline{Case 2.}\ Suppose   $\f-p$ has no zero. Then $f$ cannot be a polynomial. So $f$ is a rational function which is not a polynomial. Now we put $l=0$ in \eqref{eq2.3} and proceed as  in Subcase 1.\\
\end{proof}
\section{Proof of Main Results}
{\bf{Proof of Theorem \ref{thm1}}}:  Since normality is a local property, we assume that
${D}=\Delta$. Suppose that $\fr$ is not normal
in $\Delta$. Then there exists at least one point $z_0$ such that
$\fr$ is not normal  at the point $z_0$ in $\Delta$. Without loss of
generality we assume that $z_0=0$. Then by Lemma \ref{lem1}, for \[\al=-\frac{\sum_{i=1}^{k}t_i}{n+\sum_{i=1}^{k}n_i}\]  there
exist
\begin{enumerate}
\item{ a sequence of complex numbers $z_j \rightarrow 0$, $|z_j|<r<1$},
\item{ a sequence of functions $f_j\in \mathcal F$ },
\item{ a sequence of positive numbers $\rho_j \rightarrow 0$},
\end{enumerate}
such that $g_j(\zeta)=\rho_j^{\alpha}f_j(z_j+\rho_j\zeta) $ converges to a non-constant meromorphic function $g(\zeta)$ on $\C$ with $g^{\#}(\zeta)\leq g^{\#}(0)=1$. Moreover $g$ is of order at most two . \\
We see that
\begin{align}
 f_j^n(z_j+\rho_j\zeta)&(f_j^{n_1})^{(t_1)}(z_j+\rho_j\zeta)\ldots(f_j^{n_k})^{(t_k)}(z_j+\rho_j\zeta)\notag\\
&=g_j^n(\zeta)(g_j^{n_1})^{(t_1)}(\zeta)\ldots(g_j^{n_k})^{(t_k)}(\zeta)\rightarrow g^n(\zeta)(g^{n_1})^{(t_1)}(\zeta)\ldots(g^{n_k})^{(t_k)}(\zeta),\label{4.1}
\end{align}
as $j\rightarrow \ity$, locally spherically uniformly.\\

Let
\begin{equation}\label{4.2}
g^n(\zeta)(g^{n_1})^{(t_1)}(\zeta)\ldots(g^{n_k})^{(t_k)}(\zeta)\equiv p.
 \end{equation}
 Then $g$ is a non vanishing entire function. So using Lemma \ref{lemmaCH} we write  $g(\zeta)=\exp(c\zeta+d),$ where $c(\neq 0), d$ are constants. Then from \eqref{4.2} we get $$(n_1c)^{t_1}\ldots (n_kc)^{t_k}\exp\left(\left(n+\sum_{i=1}^{k}n_i\right)c\zeta+\left(n+\sum_{i=1}^{k}n_i\right)d\right)\equiv p,$$
which is not possible. Hence $g^n(\zeta)(g^{n_1})^{(t_1)}(\zeta)\ldots(g^{n_k})^{(t_k)}(\zeta)\not\equiv p.$\\

Therefore by Lemma \ref{lem3} and Lemma \ref{lem4}, $g^n(\zeta)(g^{n_1})^{(t_1)}(\zeta)\ldots(g^{n_k})^{(t_k)}(\zeta)- p$ has at least two distinct zeros say $\zeta_0  \ \text{and} \ \zeta_0^* $. Now, we choose $\delta>0$ small enough so that \\ $\Delta(\zeta_0,\delta)\cap \Delta(\zeta_{0}^*,\delta)=\emptyset$  and $g^n(\zeta)(g^{n_1})^{(t_1)}(\zeta)\ldots(g^{n_k})^{(t_k)}(\zeta)- p$ has no other zeros in $\Delta(\zeta_0,\delta)\cup \Delta(\zeta_{0}^*,\delta)$. By Hurwitz's theorem, there exist two sequences $\{\zeta_{j}\}\subset \Delta(\zeta_0,\delta),$ \quad $ \{\zeta_{j}^*\}\subset \Delta(\zeta_{0}^*,\delta)$ converging to $\zeta_{0},\ \text{and}\ \zeta_{0}^*$ respectively and from \eqref{4.1}, for sufficiently large $j,$ we have
 \begin{equation}\label{4.3}
g_j^n(\zeta_j)(g_j^{n_1})^{(t_1)}(\zeta_j)\ldots(g_j^{n_k})^{(t_k)}(\zeta_j)=p \ \text{and}\ g_j^n(\zeta_j^*)(g_j^{n_1})^{(t_1)}(\zeta_j^*)\ldots(g_j^{n_k})^{(t_k)}(\zeta_j^*)=p.
\end{equation}

 Since, by assumption that $f_j^n(f_j^{n_1})^{(t_1)}\ldots(f_j^{n_k})^{(t_k)}$ and $f_m^n(f_m^{n_1})^{(t_1)}\ldots(f_m^{n_k})^{(t_k)}$ share $p$  in ${D}=\Delta$, for each pair  $f_j$  and $f_m$ in $\fr$, then  by \eqref{4.3}, for any $m$ and for all $j$ we get \begin{center}$g_m^n(\zeta_j)(g_m^{n_1})^{(t_1)}(\zeta_j)\ldots(g_m^{n_k})^{(t_k)}(\zeta_j)=p \ \text{and}\ g_m^n(\zeta_j^*)(g_m^{n_1})^{(t_1)}(\zeta_j^*)\ldots(g_m^{n_k})^{(t_k)}(\zeta_j^*)=p.$ \end{center}

We fix $m$  and letting $j\rightarrow \ity,$ and noting $z_j+\rho_j\zeta_j\rightarrow 0$, $z_j+\rho_j\zeta_{j}^*\rightarrow 0$, we obtain
\begin{equation}\notag
f_m^n(0)(f_m^{n_1})^{(t_1)}(0)\ldots(f_m^{n_k})^{(t_k)}(0)-p=0.
\end{equation}
Since the zeros  are isolated, for sufficiently large $j$  we have
$z_j+\rho_j\zeta_j=0,$   $z_j+\rho_j\zeta_{j}^*=0 $. Hence  $\zeta_j=-{z_j}/{\rho_j}$ and $\zeta_{j}^*=-{z_j}/{\rho_j},$
which is not possible as $\Delta(\zeta_0,\delta)\cap \Delta(\zeta_{0}^*,\delta)=\emptyset$. This completes the proof. \hfill $\Box$

The proof of Theorem \ref{thm4} is similar to the proof of Theorem \ref{thm1}.\\

{\bf{Proof of Theorem \ref{thm2}}}:
    We may again assume that
${D}=\Delta$. Suppose that $\fr$ is not normal
in $\Delta$. Then there exists at least one point $z_0$ such that
$\fr$ is not normal  at the point $z_0$ in $\Delta$. Without loss of
generality we assume that $z_0=0$. Then by Lemma \ref{lem1}, for \[\al=-\frac{\sum_{i=1}^{k}t_i}{n+\sum_{i=1}^{k}n_i}\]  there
exist
\begin{enumerate}
\item{ a sequence of complex numbers $z_j \rightarrow 0$, $|z_j|<r<1$},
\item{ a sequence of functions $f_j\in \mathcal F$ },
\item{ a sequence of positive numbers $\rho_j \rightarrow 0$},
\end{enumerate}
such that $g_j(\zeta)=\rho_j^{\alpha}f_j(z_j+\rho_j\zeta) $ converges to a non-constant meromorphic function $g(\zeta)$ on $\C$ with $g^{\#}(\zeta)\leq g^{\#}(0)=1$. Moreover $g$ is of order at most two . \\
We see that
\begin{align}
 f_j^n(z_j+\rho_j\zeta)&(f_j^{n_1})^{(t_1)}(z_j+\rho_j\zeta)\ldots(f_j^{n_k})^{(t_k)}(z_j+\rho_j\zeta)\notag\\
&=g_j^n(\zeta)(g_j^{n_1})^{(t_1)}(\zeta)\ldots(g_j^{n_k})^{(t_k)}(\zeta)\rightarrow g^n(\zeta)(g^{n_1})^{(t_1)}(\zeta)\ldots(g^{n_k})^{(t_k)}(\zeta),\label{5.1}
\end{align}
$\text{as} \ j\rightarrow \ity,$ locally spherically uniformly.\\

From the proof of above result we see that $g^n(\zeta)(g^{n_1})^{(t_1)}(\zeta)\ldots(g^{n_k})^{(t_k)}(\zeta)\not\equiv p.$ Now we claim that $g^n(\zeta)(g^{n_1})^{(t_1)}(\zeta)\ldots(g^{n_k})^{(t_k)}(\zeta)-p$ has at most one zero IM. Suppose that $g^n(\zeta)(g^{n_1})^{(t_1)}(\zeta)\ldots(g^{n_k})^{(t_k)}(\zeta)-p$ has two distinct zeros say $\zeta_0$ and $\zeta_0^*$ and choose $\delta>0$ small enough so that $\Delta(\zeta_0,\delta)\cap \Delta(\zeta_{0}^*,\delta)=\emptyset$  and $g^n(\zeta)(g^{n_1})^{(t_1)}(\zeta)\ldots(g^{n_k})^{(t_k)}(\zeta)- p$ has no other zeros in $\Delta(\zeta_0,\delta)\cup \Delta(\zeta_{0}^*,\delta)$. By Hurwitz's theorem, there exist two sequences $\{\zeta_{j}\}\subset \Delta(\zeta_0,\delta), \{\zeta_{j}^*\}\subset \Delta(\zeta_{0}^*,\delta)$ converging to $\zeta_{0}\ \text{and}\ \zeta_{0}^*$ respectively and from \eqref{5.1}, for sufficiently large $j,$ we have
 \begin{equation}\label{5.3}
g_j^n(\zeta_j)(g_j^{n_1})^{(t_1)}(\zeta_j)\ldots(g_j^{n_k})^{(t_k)}(\zeta_j)=p \ \text{and}\ g_j^n(\zeta_j^*)(g_j^{n_1})^{(t_1)}(\zeta_j^*)\ldots(g_j^{n_k})^{(t_k)}(\zeta_j^*)=p.
\end{equation}

Since $z_j\rightarrow 0$ and $\rho_j\rightarrow 0$, we get for sufficiently large $j$, $z_j+\rho_j\zeta_j\in \Delta(\zeta_0,\delta)$ and $z_j+\rho_j\zeta_{j}^*\in \Delta(\zeta_{0}^*,\delta)$. Therefore $f_j^n(f_j^{n_1})^{(t_1)}\ldots(f_j^{n_k})^{(t_k)}-p $ has two distinct zeros, which contradicts the fact that $f_j^n(f_j^{n_1})^{(t_1)}\ldots(f_j^{n_k})^{(t_k)}-p$ has at most one zero. But  Lemma \ref{lem3} and Lemma \ref{lem4} confirm that there does not exist  such a non-constant  meromorphic function. This contradiction shows that $\fr$ is normal in $\Delta$ and this proves the theorem. \hfill $\Box$

\section{Extensions of Theorem \ref{thm1} and Theorem \ref{thm4}}

It is natural to ask whether one can replace the value $p$, in Theorem \ref{thm1}  by a holomorphic function $\alpha(z)$. In this direction  we extend Theorem \ref{thm1} in the following manner:

\begin{theorem}\label{thm3}
Let $\al(z)$ be a holomorphic function defined in a domain $D\subset \C$ such that $\al(z)\not=0$.  Let $n$ be a non negative integer and $n_1, n_2,\ldots, n_k, t_1, t_2,\ldots, t_k $ be  positive integers such that
  \begin{enumerate}
 \item[$(a)$]{ $n_j \geq t_j$ for all $1\leq j \leq k,$}
 \item [$(b)$]{$n+\sum_{j=1}^{k}n_j\geq 3+ \sum_{j=1}^{k}t_j.$}
 \end{enumerate}
Let $\fr$ be a family of meromorphic functions in a domain $D$ such that for every pair of functions $f, g \in \fr,  f^n(z)(f^{n_1})^{(t_1)}(z)\ldots(f^{n_k})^{(t_k)}(z)$ and $g^n(z)(g^{n_1})^{(t_1)}(z)\ldots(g^{n_k})^{(t_k)}(z)$ share $\al(z)$ IM on $D$. Then $\fr$ is normal in $D$.\\
 \end{theorem}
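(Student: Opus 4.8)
The plan is to run the proof of Theorem~\ref{thm1} essentially unchanged; the only genuinely new ingredient is that, when one rescales around the point of non-normality, the holomorphic function $\al$ must be kept track of, and this is supplied by its continuity. Throughout, write $\Phi_h:=h^{n}(h^{n_1})^{(t_1)}\cdots(h^{n_k})^{(t_k)}$ for a function $h$.

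First I would reduce to $D=\Delta$ (normality is local) and assume, for contradiction, that $\fr$ is not normal at some point, which we may take to be $0$. Put $p:=\al(0)$; since $\al$ is zero-free on $D$, $p\neq0$. Applying Lemma~\ref{lem1} with the exponent
\[
\gamma:=-\frac{\sum_{i=1}^{k}t_i}{n+\sum_{i=1}^{k}n_i}
\]
(which lies in $(-1,1)$ because condition $(b)$ forces $n+\sum n_i>\sum t_i$), one obtains $z_j\to0$ with $|z_j|<r<1$, functions $f_j\in\fr$, and positive numbers $\rho_j\to0$ such that $g_j(\zeta):=\rho_j^{\gamma}f_j(z_j+\rho_j\zeta)$ converges locally spherically uniformly on $\C$ to a non-constant meromorphic function $g$ of order at most two, with $g^{\#}(\zeta)\le g^{\#}(0)=1$.

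Next I would pass to the limit. Exactly as in the proof of Theorem~\ref{thm1}, the choice of $\gamma$ makes the powers of $\rho_j$ cancel, so that $\Phi_{f_j}(z_j+\rho_j\zeta)=\Phi_{g_j}(\zeta)\to\Phi_g(\zeta)$ locally uniformly on $\C$. The new point is that $\al$, being holomorphic and hence continuous at $0$, satisfies $\al(z_j+\rho_j\zeta)\to\al(0)=p$ locally uniformly, since $z_j+\rho_j\zeta\to0$ uniformly on compact subsets of $\C$. Hence, putting $G_j(\zeta):=\Phi_{g_j}(\zeta)-\al(z_j+\rho_j\zeta)=\big(\Phi_{f_j}-\al\big)(z_j+\rho_j\zeta)$ and $G(\zeta):=\Phi_g(\zeta)-p$, I get $G_j\to G$ locally uniformly on $\C$. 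From here the argument is identical to Theorem~\ref{thm1}. One first checks $G\not\equiv0$: if $\Phi_g\equiv p$ then $g$ is a non-vanishing entire function, hence of order $\le1$ by Lemma~\ref{lemmaCH}, so $g=\exp(c\zeta+d)$ with $c\neq0$, forcing $(n_1c)^{t_1}\cdots(n_kc)^{t_k}\exp\big((n+\sum n_i)(c\zeta+d)\big)\equiv p$, which is impossible. Since $g$ is non-constant, Lemma~\ref{lem3} (for $g$ transcendental) and Lemma~\ref{lem4} (for $g$ rational) then give two distinct zeros $\zeta_0\neq\zeta_0^{*}$ of $G$. Choosing disjoint disks $\Delta(\zeta_0,\delta),\Delta(\zeta_0^{*},\delta)$ on which $G$ has no other zeros, Hurwitz's theorem yields $\zeta_j\in\Delta(\zeta_0,\delta)$ and $\zeta_j^{*}\in\Delta(\zeta_0^{*},\delta)$ with $G_j(\zeta_j)=G_j(\zeta_j^{*})=0$ for large $j$; equivalently $\Phi_{f_j}-\al$ vanishes at $w_j:=z_j+\rho_j\zeta_j$ and at $w_j^{*}:=z_j+\rho_j\zeta_j^{*}$, with $w_j\neq w_j^{*}$ because $\rho_j>0$ and the two disks are disjoint. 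The IM-sharing hypothesis then makes $\Phi_{f_m}-\al$ vanish at $w_j$ and $w_j^{*}$ for every fixed $m$ and all large $j$. Fixing such an $m$ with $\Phi_{f_m}\not\equiv\al$ (one exists among the $f_j$'s, since otherwise $\Phi_{g_j}=\al(z_j+\rho_j\,\cdot\,)\to p$ would give $\Phi_g\equiv p$, already excluded) and letting $j\to\infty$ gives $\Phi_{f_m}(0)=\al(0)$; so, the zeros of $\Phi_{f_m}-\al$ being isolated, $w_j=0=w_j^{*}$ for large $j$, contradicting $w_j\neq w_j^{*}$. This contradiction proves normality of $\fr$ in $\Delta$, hence in $D$.

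The hard part, if there is one, is merely to see that replacing the constant $p$ by the function $\al$ costs nothing, and this rests on two small observations: continuity of $\al$ turns $\al(z_j+\rho_j\zeta)$ into the constant $\al(0)\neq0$ in the limit, so the convergence $G_j\to G$ survives; and the sharing hypothesis, being stated for $\al$ itself, transports the zeros $w_j,w_j^{*}$ between members of $\fr$ exactly as in the constant case. Everything with genuine analytic weight --- the cancellation of the $\rho_j$-powers, Lemma~\ref{lemmaCH}, and the ``two distinct $p$-points'' conclusions of Lemmas~\ref{lem3} and~\ref{lem4} --- carries over verbatim from the proof of Theorem~\ref{thm1}.
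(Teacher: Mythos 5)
Your proposal is correct and follows essentially the same route as the paper's own proof: rescale via Lemma \ref{lem1} with the same exponent, use continuity of $\al$ to pass to the limit $\Phi_g-\al(0)$, exclude $\Phi_g\equiv\al(0)$ via Lemma \ref{lemmaCH}, invoke Lemmas \ref{lem3} and \ref{lem4} for two distinct zeros, and transport them by Hurwitz and the IM-sharing hypothesis to reach the same contradiction. Your explicit check that some $f_m$ satisfies $\Phi_{f_m}\not\equiv\al$ (so that the zeros of $\Phi_{f_m}-\al$ are isolated) is a small point the paper leaves implicit, but it does not change the argument.
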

\begin{proof}

Once again we assume that
${D}=\Delta$. Suppose that $\fr$ is not normal
in $\Delta$. Then there exists at least one point $z_0$ such that
$\fr$ is not normal  at the point $z_0$ in $\Delta$. Without loss of
generality we assume that $z_0=0$. Then by Lemma \ref{lem1}, for \[\al=-\frac{\sum_{i=1}^{k}t_i}{n+\sum_{i=1}^{k}n_i}\]  there
exist
\begin{enumerate}
\item{ a sequence of complex numbers $z_j \rightarrow 0$, $|z_j|<r<1$},
\item{ a sequence of functions $f_j\in \mathcal F$ },
\item{ a sequence of positive numbers $\rho_j \rightarrow 0$},
\end{enumerate}
such that $g_j(\zeta)=\rho_j^{\alpha}f_j(z_j+\rho_j\zeta) $ converges to a non-constant meromorphic function $g(\zeta)$ on $\C$ with $g^{\#}(\zeta)\leq g^{\#}(0)=1$. Moreover $g$ is of order at most two . \\
We see that
\begin{align}
f_j^n(z_j+\rho_j\zeta)&(f_j^{n_1})^{(t_1)}(z_j+\rho_j\zeta)\ldots(f_j^{n_k})^{(t_k)}(z_j+\rho_j\zeta)-\al(z_j+\rho_j\zeta)\notag\\
&=g_j^n(\zeta)(g_j^{n_1})^{(t_1)}(\zeta)\ldots(g_j^{n_k})^{(t_k)}(\zeta) - \al(z_j+\rho_j\zeta)\notag\\
&\rightarrow g^n(\zeta)(g^{n_1})^{(t_1)}(\zeta)\ldots(g^{n_k})^{(t_k)}(\zeta)- \al(0),\label{6.1}
\end{align}
$\text{as} \ j\rightarrow \ity ,$ locally spherically uniformly.\\

Let
\begin{equation}\label{6.2}
g^n(\zeta)(g^{n_1})^{(t_1)}(\zeta)\ldots(g^{n_k})^{(t_k)}(\zeta)\equiv \al(0).
 \end{equation}
 Then $g$ is an entire function having no zero. So by Lemma \ref{lemmaCH} we write $g(\zeta)=\exp(c\zeta+d),$ where $c(\neq 0), d$ are constants. Then from \eqref{6.2} we get $$(n_1c)^{t_1}\ldots (n_kc)^{t_k}\exp\left(\left(n+\sum_{i=1}^{k}n_i\right)c\zeta+\left(n+\sum_{i=1}^{k}n_i\right)d\right)\equiv \al(0),$$
which is not possible. Hence $g^n(\zeta)(g^{n_1})^{(t_1)}(\zeta)\ldots(g^{n_k})^{(t_k)}(\zeta)\not\equiv \al(0).$\\

Therefore by Lemma \ref{lem3} and Lemma \ref{lem4} $g^n(\zeta)(g^{n_1})^{(t_1)}(\zeta)\ldots(g^{n_k})^{(t_k)}(\zeta)- \al(0)$ has at least two distinct zeros say $\zeta_0  \ \text{and} \ \zeta_0^* $. Now, we choose $\delta>0$ small enough so that $\Delta(\zeta_0,\delta)\cap \Delta(\zeta_{0}^*,\delta)=\emptyset$  and $g^n(\zeta)(g^{n_1})^{(t_1)}(\zeta)\ldots(g^{n_k})^{(t_k)}(\zeta)- \al(0)$ has no other zeros in $\Delta(\zeta_0,\delta)\cup \Delta(\zeta_{0}^*,\delta)$.\\

 By Hurwitz's theorem, there exist two sequences $\{\zeta_{j}\}\subset \Delta(\zeta_0,\delta), \{\zeta_{j}^*\}\subset \Delta(\zeta_{0}^*,\delta)$ converging to $\zeta_{0},\ \text{and}\ \zeta_{0}^*$ respectively and from \eqref{6.1}, for sufficiently large $j$, we have
 \begin{align}
&g_j^n(\zeta_j)(g_j^{n_1})^{(t_1)}(\zeta_j)\ldots(g_j^{n_k})^{(t_k)}(\zeta_j)=\al(z_j+\rho_j\zeta_j)\notag  \\
&g_j^n(\zeta_j^*)(g_j^{n_1})^{(t_1)}(\zeta_j^*)\ldots(g_j^{n_k})^{(t_k)}(\zeta_j^*)=\al(z_j+\rho_j\zeta_j).\label{6.3}
\end{align}

 Since, by assumption that $f_j^n(f_j^{n_1})^{(t_1)}\ldots(f_j^{n_k})^{(t_k)}$ and $f_m^n(f_m^{n_1})^{(t_1)}\ldots(f_m^{n_k})^{(t_k)}$ share $\al(z)$ IM  in ${D}=\Delta$, for each pair  $f_j$  and $f_m$ in $\fr$, then  by \eqref{6.3}, for any $m$ and for all $j$ we get $$g_m^n(\zeta_j)(g_m^{n_1})^{(t_1)}(\zeta_j)\ldots(g_m^{n_k})^{(t_k)}(\zeta_j)=\al(z_j+\rho_j\zeta_j)$$ and $$g_m^n(\zeta_j^*)(g_m^{n_1})^{(t_1)}(\zeta_j^*)\ldots(g_m^{n_k})^{(t_k)}(\zeta_j^*)=\al(z_j+\rho_j\zeta_j).$$\\

We fix $m$  and letting $j\rightarrow \ity,$ and noting $z_j+\rho_j\zeta_j\rightarrow 0$, $z_j+\rho_j\zeta_{j}^*\rightarrow 0$, we obtain
\begin{equation}\notag
f_m^n(0)(f_m^{n_1})^{(t_1)}(0)\ldots(f_m^{n_k})^{(t_k)}(0)-\al(0)=0.
\end{equation}
Since the zeros are isolated, for sufficiently large $j$  we have
$z_j+\rho_j\zeta_j=0,$   $z_j+\rho_j\zeta_{j}^*=0 $. Hence $\zeta_j=-{z_j}/{\rho_j},$ and $ \zeta_{j}^*=-{z_j}/{\rho_j}$,
which is not possible as $\Delta(\zeta_0,\delta)\cap \Delta(\zeta_{0}^*,\delta)=\emptyset$. This completes the proof.
\end{proof}

For  families of holomorphic functions  we have the following result:
\begin{theorem}\label{thm5}
Let $\al(z)$ be a holomorphic function defined in a domain $D\subset \C$ such that $\al(z)\not=0$.  Let $n$ be a non negative integer and $n_1, n_2,\ldots, n_k, t_1, t_2,\ldots, t_k $ be  positive integers such that
  \begin{enumerate}
 \item[$(a)$]{ $n_j \geq t_j$ for all $1\leq j \leq k,$}
 \item [$(b)$]{$n+\sum_{j=1}^{k}n_j\geq 2+ \sum_{j=1}^{k}t_j.$}
 \end{enumerate}
Let $\fr$ be a family of holomorphic functions in a domain $D$ such that for every pair of functions $f, g \in \fr,  f^n(z)(f^{n_1})^{(t_1)}(z)\ldots(f^{n_k})^{(t_k)}(z)$ and $g^n(z)(g^{n_1})^{(t_1)}(z)\ldots(g^{n_k})^{(t_k)}(z)$ share $\al(z)$ IM on $D$. Then $\fr$ is normal in $D$.
 \end{theorem}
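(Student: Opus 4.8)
The plan is to run the proof of Theorem~\ref{thm3} almost verbatim, substituting for its two function–theoretic inputs (Lemma~\ref{lem3} and the rational‑function Lemma~\ref{lem4}) their entire/polynomial analogues, which are precisely the ones that remain valid under the weaker bound $n+\sum_j n_j\ge 2+\sum_j t_j$. Since normality is local I would take $D=\Delta$, assume $\fr$ is not normal at a point placed at $0$, and apply Lemma~\ref{lem1} with exponent $\al=-\big(\sum_i t_i\big)\big/\big(n+\sum_i n_i\big)$ to obtain $z_j\to 0$, functions $f_j\in\fr$, and $\rho_j\to 0$ such that $g_j(\zeta):=\rho_j^{\al}f_j(z_j+\rho_j\zeta)$ converges locally spherically uniformly to a non‑constant meromorphic $g$ on $\C$ with $g^{\#}(\zeta)\le g^{\#}(0)=1$. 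Here is the first (easy) deviation from the meromorphic setting: each $g_j$ is holomorphic, and a non‑constant spherically‑uniform limit of holomorphic functions is holomorphic, so $g$ is \emph{entire}; and since $g^{\#}$ is bounded, Lemma~\ref{lemmaCH} gives that $g$ has order at most $1$.

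Put $n'=n+\sum_i n_i$ and $t'=\sum_i t_i$. The choice of $\al$ makes the scaling identity exact,
\[
g_j^n(g_j^{n_1})^{(t_1)}\cdots(g_j^{n_k})^{(t_k)}(\zeta)=\big(f_j^n(f_j^{n_1})^{(t_1)}\cdots(f_j^{n_k})^{(t_k)}\big)(z_j+\rho_j\zeta),
\]
because each $\zeta$‑differentiation supplies a factor $\rho_j$ and $n'\al+t'=0$; subtracting $\al(z_j+\rho_j\zeta)\to\al(0)$ (note $\al(0)\ne 0$ since $\al$ is zero‑free on $D$), the left side minus $\al$ tends locally uniformly to $G(\zeta):=g^n(g^{n_1})^{(t_1)}\cdots(g^{n_k})^{(t_k)}(\zeta)-\al(0)$. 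I would first exclude $G\equiv 0$: in that case $g$ is a zero‑free entire function of order $\le 1$, hence $g=\exp(c\zeta+d)$ with $c\ne 0$, and then $g^n(g^{n_1})^{(t_1)}\cdots(g^{n_k})^{(t_k)}=\big(\prod_i(n_ic)^{t_i}\big)\exp\!\big(n'(c\zeta+d)\big)$ is non‑constant, contradicting that it equals the constant $\al(0)$. So $G\not\equiv 0$.

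The substantive step is to show that $G$ has at least two distinct zeros. If $g$ is transcendental, this is exactly Lemma~\ref{lem5} applied with the non‑zero value $\al(0)$ — its hypotheses $(a)$ and $(b)$ coincide with those of the present theorem — and it even yields infinitely many zeros. If $g$ is a non‑constant polynomial, one cannot quote Lemma~\ref{lem4} directly, since it is stated under $n'\ge 3+t'$; but the only pertinent part of its proof is the polynomial subcase of Case~1, and that argument never uses more than $n'-t'\ge 2$. Concretely, were $\zeta_0$ the only distinct zero of $G$, one would have $g^n(g^{n_1})^{(t_1)}\cdots(g^{n_k})^{(t_k)}-\al(0)=A(\zeta-\zeta_0)^{L}$ with $L=(\deg g)\,n'-t'\ge n'-t'\ge 2$, so $\big(g^n(g^{n_1})^{(t_1)}\cdots(g^{n_k})^{(t_k)}\big)'$ vanishes only at $\zeta_0$; but a zero $w_0$ of $g$ of multiplicity $\mu$ is a zero of $g^n(g^{n_1})^{(t_1)}\cdots(g^{n_k})^{(t_k)}$ of multiplicity exactly $\mu n'-t'\ge 2$ (irrespective of which individual factors $(g^{n_i})^{(t_i)}$ happen to vanish there), hence a zero of that derivative, forcing $w_0=\zeta_0$ and the contradiction $0=g^n(g^{n_1})^{(t_1)}\cdots(g^{n_k})^{(t_k)}(w_0)=\al(0)$. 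Thus $G$ has distinct zeros $\zeta_0,\zeta_0^{*}$.

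I would finish as in Theorem~\ref{thm3}: choose $\delta>0$ with $\Delta(\zeta_0,\delta)\cap\Delta(\zeta_0^{*},\delta)=\emptyset$ and no other zeros of $G$ in the union, apply Hurwitz to $g_j^n(g_j^{n_1})^{(t_1)}\cdots(g_j^{n_k})^{(t_k)}(\zeta)-\al(z_j+\rho_j\zeta)\to G$ to get $\zeta_j\to\zeta_0$, $\zeta_j^{*}\to\zeta_0^{*}$ with $\big(f_j^n(f_j^{n_1})^{(t_1)}\cdots(f_j^{n_k})^{(t_k)}\big)(z_j+\rho_j\zeta_j)=\al(z_j+\rho_j\zeta_j)$ and similarly at $\zeta_j^{*}$, transfer these equalities via the IM‑sharing of $\al$ to a member $f_m$ of the sequence whose monomial is not identically $\al$ (such $m$ exists, for otherwise the scaling identity would force $G\equiv 0$), and let $j\to\infty$ (so $z_j+\rho_j\zeta_j\to 0$, $z_j+\rho_j\zeta_j^{*}\to 0$) to obtain $\big(f_m^n(f_m^{n_1})^{(t_1)}\cdots(f_m^{n_k})^{(t_k)}\big)(0)=\al(0)$; since that monomial minus $\al$ is holomorphic and $\not\equiv 0$, its zeros are isolated, so $z_j+\rho_j\zeta_j=0=z_j+\rho_j\zeta_j^{*}$ for large $j$, whence $\zeta_j=\zeta_j^{*}$ — impossible for disjoint disks. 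Hence $\fr$ is normal. The one place demanding genuine (if modest) care, and thus the main obstacle, is the polynomial case: verifying that Case~1 of Lemma~\ref{lem4} really goes through under $n'\ge 2+t'$, which rests on the fact that a zero of $g$ of multiplicity $\mu$ is a zero of the differential monomial of multiplicity exactly $\mu n'-t'$.
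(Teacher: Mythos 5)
Your proof is correct and follows essentially the route the paper intends for Theorem \ref{thm5} (which it dispatches with ``the proof is similar to that of Theorem \ref{thm3}''): Zalcman rescaling with $\al=-t'/n'$, excluding $G\equiv 0$ via Lemma \ref{lemmaCH}, producing two distinct zeros of the limit monomial minus $\al(0)$, and the Hurwitz/sharing/isolated-zeros endgame. The only genuine addition is your explicit verification that the limit is entire and that the two-zero statement holds under the weaker bound $n'\ge 2+t'$ --- Lemma \ref{lem5} for the transcendental case and a direct multiplicity argument replacing the polynomial subcase of Lemma \ref{lem4} --- a detail the paper leaves implicit, and which you handle correctly.
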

The proof is similar to  the proof of Theorem \ref{thm3}\\

\textbf{Acknowledgements} \quad  We wish to thank Professor Kaushal  Verma, IISc Bangalore for reading the preliminary version of the manuscript. His suggestions at the numerous steps in the paper has improved considerably the mathematical contents of the paper.


\begin{thebibliography}{00}

\bibitem{Ahl}  Ahlfors LV. {Complex Analysis,} Third edition, {McGraw-Hill,} 1979.

\bibitem{CH}  Clunie J,  Hayman WK. {The spherical derivative of integral and meromorphic function,} Comment Math Helv 1966; 40:  117--148.
\bibitem{GS} Datt G,  Kumar S. {Normality and Sharing functions,} (accepted for publication) {Ind J Pur Appl Math}, arXiv: 1308.5401.
\bibitem{dethloff}  Dethloff G, Tan TV, Thin NV. {Normal criteria for  families of meromorphic functions,} J Math Anal Appl 2014;  411:  675--683.
\bibitem{HG}  Han M,  Gu Y. {The normal family of meromophic functions,}{ Acta Math Sci} 2008; {28 B} No. 4:   759--762.
\bibitem{Hay}  Hayman WK. {Meromorphic Functions,} {Clarendon Press, Oxford}, 1964.
\bibitem{hin}  Hinchliffe JD. {On a result of Chuang related to Hayman's alternative,} Comput Methods Funct Theory 2002; 2: 293--297.
\bibitem{Li}  Li Y,  Gu Y. {On normal families of meromorphic functions}, J Math Anal Appl 2009;  {354}:  421--425.
\bibitem{Schiff}  Schiff JL. { Normal Families,}  Berlin: Springer-Verlag,  1993.
\bibitem{Sch1} Schwick W. {Normality criteria for families of meromophic functions,} J Anal Math 1989; 52: 241--289.
\bibitem{Sch} Schwick W. {Sharing values and normality,} {Arch der Math} 1992; {59}: 50--54.
\bibitem{YMW}  Wang YM.  {On normality of meromorphic functions with multiple zeros and sharing values}, Turk J Math 2012; 36:  263--271.
\bibitem{ccyang}  Yang CC,  Yi HX.  {Uniqueness theory of meromorphic functions,} {Science Press/ Kluwer Academic Publishers,} 2003.

\bibitem{Yang} Yang L.  {Value Distribution Theory,}  Berlin: {Springer-Verlag,} 1993.
\bibitem{Zalc 1} Zalcman L. {A heuristic principle in complex function theory,} The Amer Math Monthly 1975; {82}: 813--817.
\bibitem{Zalc} Zalcman L. {Normal families: new perspectives,} Bull Amer Math Soc 1998; {35}, no. 3:  215--230.
\bibitem{lahiri} Zeng S,  Lahiri I. {A normality criterion for meromorphic functions,} Kodai Math J 2012; {35}: 105--114.
\bibitem{lahiri 1} Zeng S,  Lahiri I. {A normality criterion for meromorphic functions having multiple zeros,} Ann Polon Math 2014; 110.3:  283--294.
\end{thebibliography}
\end{document}